\newtheorem{theorem}{Theorem}[section]
\newtheorem{definition}[theorem]{Definition}
\newtheorem{lemma}[theorem]{Lemma}
\newtheorem{claim}{Claim}[section]
\date{October 19, 2023}
\begin{document}

\title{Refinement on  spectral Tur\'{a}n's theorem\thanks{
This paper was firstly announced in April, 2022,
and was later published on SIAM J. Discrete Math. 37 (4) (2023) 2462--2485.  
See \url{https://doi.org/10.1137/22M1507814}. 
The research was supported by National Natural Science Foundation of China grant 11931002.
E-mail addresses: \url{ytli0921@hnu.edu.cn} (Y\v{o}ngt\={a}o L\v{i}),
\url{ypeng1@hnu.edu.cn} (Yu\`{e}ji\`{a}n P\'{e}ng, corresponding author).}}

\author{Yongtao Li, Yuejian Peng$^{\dag}$\\[2ex]
{\small School of Mathematics, Hunan University} \\
{\small Changsha, Hunan, 410082, P.R. China }  }

\maketitle

\vspace{-0.5cm}

\begin{abstract}
A well-known result in extremal spectral graph theory,
due to Nosal and Nikiforov, states that
if $G$ is a triangle-free graph on $n$ vertices, then $\lambda (G)
\le \lambda (K_{\lfloor \frac{n}{2}\rfloor, \lceil \frac{n}{2} \rceil })$, equality holds
if and only if $G=K_{\lfloor \frac{n}{2}\rfloor, \lceil \frac{n}{2} \rceil }$.
 Nikiforov [Linear Algebra Appl. 427 (2007)]
 extended this result to $K_{r+1}$-free graphs for every integer $r\ge 2$.
 This is known as the spectral Tur\'{a}n theorem.
 Recently, Lin, Ning and Wu [Combin. Probab. Comput. 30 (2021)]
proved a refinement on this result for non-bipartite triangle-free graphs.
In this paper, we provide alternative proofs for
the result of Nikiforov and the result of Lin, Ning and Wu.
Our  proof can allow us to extend
the later result to non-$r$-partite $K_{r+1}$-free graphs.
Our result  refines the theorem of Nikiforov and
it also can be viewed as a spectral version of a theorem of Brouwer.
 \end{abstract}

{{\bf Key words:}
Tur\'{a}n theorem;
Spectral radius;
Zykov symmetrization. }

{{\bf 2010 Mathematics Subject Classification.}  05C50, 05C35.}

\section{Introduction}

Extremal graph theory is becoming  one of the significant branches of
discrete mathematics nowadays, and it has experienced
an impressive growth during the last few decades.
With the rapid developments of combinatorial number theory and combinatorial geometry,
extremal graph theory has a large number of applications to these areas of mathematics.
Problems in extremal graph theory  deal usually with the question of determining or
estimating the maximum or minimum possible
size of graphs  satisfying certain requirements,
and further characterize the extremal graphs attaining the bound.
For example,
one of the most well-studied problems is the Tur\'{a}n-type problem,
which asks to determine the maximum number of edges
in a graph forbidding the occurence of some specific substructures.
Such problems are related to other areas including
theoretical computer science,
discrete geometry, information theory and number theory.

\subsection{The classical extremal graph problems}
Given a graph $F$,
we say that a graph $G$ is $F$-free if it does not contain
  an isomorphic copy of $F$ as a subgraph.
  For example, every bipartite graph is $C_{3}$-free, where $C_3$ is a triangle.
 The {\em Tur\'{a}n number} of a graph $F$, denoted by $\mathrm{ex}(n, F)$, is the maximum number of edges
  in an $F$-free $n$-vertex graph.
  An $F$-free graph on $n$ vertices  with $\mathrm{ex}(n, F)$ edges is called an {\em extremal graph} for $F$.
  We denote by $K_{s,t}$  the complete bipartite graph with parts of sizes $s$
  and $t$.
  Over a century old, a well-known theorem of Mantel \cite{Man1907} states that if 
  $G$ is  an $n$-vertex triangle-free graph, then
$e(G) \le
e(K_{\lfloor \frac{n}{2}\rfloor, \lceil \frac{n}{2} \rceil }) =  \lfloor {n^2}/{4} \rfloor ,$
equality holds if and only if  $G=K_{\lfloor \frac{n}{2}\rfloor, \lceil \frac{n}{2} \rceil }$.

    In 1941, Tur\'{a}n \cite{Turan41} studied the question of extending Mantel's theorem to $K_{r+1}$-free graphs.
 Let $T_r(n)$ denote the complete $r$-partite graph on $n$ vertices
 whose part sizes are as equal as possible. That is,
 $T_{r}(n)=K_{t_1,t_2,\ldots ,t_r}$ with $\sum_{i=1}^r t_i=n$
 and $|t_i-t_j| \le 1$ for $i\neq j$. Tur\'{a}n's theorem states that
if $G$ is  an $n$-vertex $K_{r+1}$-free,
then
$e(G)\le e(T_r(n)),$
equality holds if and only if $G$ is the $r$-partite Tur\'{a}n graph $T_r(n)$.

Many  different proofs of Tur\'{a}n's theorem could be found in the literature;
see \cite[pp. 269--273]{AZ2014} and \cite[pp. 294--301]{Bollobas78} for more details.
Furthermore,
there are various extensions and generalizations on Tur\'{a}n's theorem;
see, e.g., \cite{BT1981,Bon1983}.
Tur\'{a}n's theorem  implies
the numerical bound
\begin{equation}  \label{eq-weak-Turan}
e(G)\le \left(1-\frac{1}{r} \right) \frac{n^2}{2}
\end{equation}
for every $n$-vertex $K_{r+1}$-free graph $G$.
This  bound seems more concise and called the weak version of Tur\'{a}n's theorem.
The problem of determining $\mathrm{ex}(n, F)$
is usually referred to as the
Tur\'{a}n-type extremal graph problem.
  It is  a cornerstone of extremal graph theory
  to
  understand $\mathrm{ex}(n, F)$  for various graphs $F$;
  see \cite{FS13, Sim13} for comprehensive surveys.

\subsection{The spectral extremal graph problems}

Let $G$ be a simple graph on $n$ vertices.
The \emph{adjacency matrix} of $G$ is defined as
$A(G)=[a_{ij}]\in \mathbb{R}^{n\times n}$ where $a_{ij}=1$ if two vertices $v_i$ and $v_j$ are adjacent in $G$, and $a_{ij}=0$ otherwise.
We say that $G$ has eigenvalues $\lambda_1 , \lambda_2,\ldots ,\lambda_n$ if these values are eigenvalues of
the adjacency matrix $A(G)$.
Let $\lambda (G)$ be the maximum  value in absolute
 among all eigenvalues of $G$, which is
 known as the {\it spectral radius} of graph $G$.
The Perron--Frobenius theorem (see, e.g., \cite[p. 120--126]{Zhan13})  implies that
the spectral radius of a graph $G$ is actually
the largest eigenvalue of $G$ and it corresponds to a nonnegative eigenvector.  Moreover, if $G$ is  connected,
then  $A(G)$ is an irreducible nonnegative matrix,
$\lambda (G)$ is an eigenvalue with multiplicity one and there
exists an entry-wise positive eigenvector corresponding to $\lambda (G)$.

\medskip
The classical extremal graph problems
 usually study the maximum or minimum
number of edges that the extremal graphs can have.
Correspondingly,
the extremal spectral problems are well-studied in the literature.
In 1970, Nosal \cite{Nosal1970}
determined the largest spectral radius of a  triangle-free graph, which states that
if $G$ is a triangle-free graph with $m$ edges, then $\lambda (G)\le \sqrt{m}$.
In order to state this result accurately, we  borrow contributions from
Nikiforov's work \cite{Niki2009jctb}, which determined the extremal case of equality.
Thus we write it as in the following complete form.
 When we consider a graph with given number of edges,
we shall ignore the possible isolated vertices if there are no confusions.

 \begin{theorem}[Nosal, 1970] \label{thmnosal}
Let $G$ be a  graph  with $m$ edges.
If $G$ is triangle-free, then
\begin{equation}   \label{eq1}
\lambda (G)\le \sqrt{m} ,
\end{equation}
 equality holds if and only if
$G$ is a complete bipartite graph.
\end{theorem}

Theorem \ref{thmnosal} implies that if  $G$ is  bipartite, then
$  \lambda (G)\le \sqrt{m} $,
 equality holds if and only if
$G$ is a complete bipartite graph.
On the one hand, Theorem \ref{thmnosal} implies  Mantel's theorem.
Indeed, applying Rayleigh's inequality, we have
$\frac{2m}{n}\le \lambda (G)\le  \sqrt{m}$,
which yields $ m \le \lfloor {n^2}/{4} \rfloor$.
On the other hand,  applying Mantel's theorem to (\ref{eq1}), we obtain that
$ \lambda (G)\le \sqrt{m}  \le \sqrt{\lfloor {n^2}/{4}\rfloor} =\lambda (K_{\lfloor \frac{n}{2}\rfloor, \lceil \frac{n}{2} \rceil })$,
which is called the spectral Mantel theorem.

\medskip 
Over the past few years,
various extensions and generalizations on the Nosal--Nikiforov theorem have been obtained in the literature; see, e.g., \cite{Niki2002cpc,Niki2007laa2,Niki2009jctb,Wil1986} for extensions
to $K_{r+1}$-free graphs,
\cite{LNW2021,ZLS2021,ZS2022dm,LP2022oddcycle} for extensions of graphs with
given size.
In addition, many spectral extremal problems are also
obtained recently; see
\cite{CFTZ20,CDT2022} for the friendship graph and the odd wheel,
\cite{LP2021,DKLNTW2021} for intersecting odd cycles and
cliques, \cite{Wang2022} for a recent conjecture.
We recommend the surveys \cite{NikifSurvey,CZ2018,LFL2022} for interested readers.
The eigenvalues of the adjacency matrix sometimes can give some information
about the  structure of a graph.
There is a rich history on the study of bounding
the eigenvalues of a graph in terms of
various parameters;
see \cite{BN2007jctb} for spectral radius and cliques,
\cite{TT2017,LN2021} for eigenvalues of outerplanar and planar graphs.

In 1986,  Wilf  \cite{Wil1986}  provided the first result regarding
the spectral version of Tur\'{a}n's theorem and
proved that for every $n$-vertex $K_{r+1}$-free graph $G$, we have
\begin{equation}  \label{eqwilf}
 \lambda (G)\le \left(1-\frac{1}{r} \right)n.
\end{equation}
In 2002,  Nikiforov \cite{Niki2002cpc}
proved that  for every $m$-edge $K_{r+1}$-free graph $G$,
\begin{equation}  \label{eqniki}
 \lambda (G)\le \sqrt{2m\left( 1-\frac{1}{r} \right)}.
 \end{equation}
 The case of equality in (\ref{eqniki}) was later characterized in \cite{Niki2009jctb}.
Both
(\ref{eqwilf}) and (\ref{eqniki}) are direct consequences of
 Motzkin--Straus' theorem \cite{MS1965}.
Combining with $\frac{2m}{n} \le \lambda (G)$,
we  see that either (\ref{eqwilf}) or (\ref{eqniki})
 can imply (\ref{eq-weak-Turan}).
Moreover, using (\ref{eq-weak-Turan}), we  know that (\ref{eqniki})  implies (\ref{eqwilf}) immediately.

\medskip

In 2007, Nikiforov \cite{Niki2007laa2} showed a
spectral version of the Tur\'{a}n theorem.

\begin{theorem}[Nikiforov, 2007] \label{thm460}
Let $G$ be a  graph on $n$ vertices.
If $G$ is $K_{r+1}$-free, then
\[ \lambda (G)\le \lambda ({T_r(n)}),\]
equality holds if and only if  $G$
is the $r$-partite Tur\'{a}n graph $T_r(n)$.
\end{theorem}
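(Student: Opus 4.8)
The plan is to prove the spectral Tur\'{a}n theorem (Theorem~\ref{thm460}) by combining a spectral analogue of the Zykov symmetrization technique with the edge version already at hand. Let $G$ be a $K_{r+1}$-free graph on $n$ vertices attaining the maximum spectral radius $\lambda(G)$ among all such graphs, and let $\mathbf{x}=(x_1,\dots,x_n)^{\top}$ be a nonnegative Perron eigenvector for $\lambda(G)$; by the Perron--Frobenius theorem quoted in the excerpt I may take $G$ connected and $\mathbf{x}$ entry-wise positive, so $\lambda(G)x_u=\sum_{v\sim u}x_v$ for every vertex $u$. The first step is to show that in such an extremal $G$ any two non-adjacent vertices $u$ and $w$ must have equal eigenvector weight $x_u=x_w$ and in fact identical neighborhoods $N(u)=N(w)$. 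The mechanism is Zykov symmetrization: if $x_u\ge x_w$ but $N(u)\ne N(w)$, then deleting all edges at $w$ and joining $w$ to exactly $N(u)$ produces a graph $G'$ that is still $K_{r+1}$-free (cloning a vertex cannot create a larger clique) while strictly increasing the quadratic form $\mathbf{x}^{\top}A(G')\mathbf{x}$ whenever $w$'s old neighborhood differs from $u$'s, forcing $\lambda(G')\ge \mathbf{x}^{\top}A(G')\mathbf{x}/\mathbf{x}^{\top}\mathbf{x}>\lambda(G)$, a contradiction.

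Once I establish that ``non-adjacency is an equivalence relation'' on the vertex set of the extremal graph, the second step is routine structural bookkeeping: the relation partitions $V(G)$ into classes of pairwise non-adjacent vertices, and because within each class the vertices share a common neighborhood and are mutually non-adjacent, while any two vertices in distinct classes are adjacent, $G$ must be a complete multipartite graph. The $K_{r+1}$-freeness then caps the number of parts at $r$, so $G$ is complete $s$-partite for some $s\le r$.

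The third step pins down the part sizes. Among complete $r$-partite (or fewer-partite) graphs on $n$ vertices, I must argue that the spectral radius is maximized precisely by the balanced Tur\'{a}n graph $T_r(n)$. Here I would first note that adding missing edges to reach exactly $r$ parts only increases $\lambda$, so the extremizer uses all $r$ parts; then, for a fixed number of parts, I would show that transferring a vertex from a larger part to a smaller part strictly increases the spectral radius unless the parts are already balanced. This balancing claim can be handled by a direct comparison of the Perron values of $K_{t_1,\dots,t_r}$ and its re-balanced cousin, exploiting the eigen-equation to compare the common eigenvector weights on the two affected parts.

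The main obstacle I anticipate is the strictness and uniqueness in the symmetrization step: I must rule out the degenerate possibility that the quadratic form fails to strictly increase, which forces a careful case analysis depending on whether $x_u=x_w$ with differing neighborhoods versus $x_u>x_w$, and I must ensure the cloning operation neither disconnects the graph nor creates a forbidden $K_{r+1}$. A secondary technical point is converting the symmetrization inequality $\lambda(G')\ge \mathbf{x}^{\top}A(G')\mathbf{x}/(\mathbf{x}^{\top}\mathbf{x})$ into a genuine strict contradiction, which relies on the Rayleigh characterization of $\lambda$ together with the fact that $\mathbf{x}$ is \emph{not} an eigenvector of $A(G')$ when the neighborhoods genuinely change. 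I expect the balancing step to be comparatively clean, since the earlier edge-version Tur\'{a}n theorem (Theorem~\ref{thmturanstrong}) already isolates $T_r(n)$ as the unique edge-maximizer, and a complete multipartite graph with more edges and the same number of parts will dominate in spectral radius as well.
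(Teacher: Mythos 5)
Your proposal is correct and follows essentially the paper's route: take a spectral-radius maximizer among $K_{r+1}$-free graphs, use a positive Perron vector, apply spectral Zykov symmetrization to force the extremal graph to be complete multipartite with at most $r$ parts, and finish with a balancing step among complete multipartite graphs. Two implementation differences are worth noting. First, your symmetrization is the pairwise re-wiring $Z_{u,v}$ (replace $N(w)$ by $N(u)$ for non-adjacent $u,w$), which requires the tie-breaking argument when $x_u=x_w$ but $N(u)\neq N(w)$ --- you handle this correctly via the Rayleigh equality case, exactly as the paper does in its Section \ref{sec3}; the paper's Section \ref{sec2} proof instead uses a two-case duplicate-and-delete argument on a triple $u,v,w$ with $vw\in E(G)$, $uv,uw\notin E(G)$, where the term $2x_vx_w>0$ coming from the deleted edge $vw$ yields strictness for free, with no tie case to analyze. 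Second, and more substantively: the paper disposes of the balancing step by citing the Feng--Li--Zhang lemma (Lemma \ref{lem-FLZ}), whereas you propose to prove it by transferring a vertex from a larger part to a smaller one and comparing Perron values. That plan is viable (it is essentially how the cited lemma and the weight-shifting Case 1 of Lemma \ref{lem42} are proved), but be aware it is the genuinely computational part of the argument, not routine bookkeeping. In particular, your closing justification --- that $T_r(n)$ maximizes edges among complete $r$-partite graphs and ``a complete multipartite graph with more edges and the same number of parts will dominate in spectral radius as well'' --- is not a valid principle: edge count does not control spectral radius even within natural families (e.g., $K_{1,15}$ has more edges than $K_5$ but smaller spectral radius), so the balancing must rest on the actual eigenvector or characteristic-polynomial comparison, not on Theorem \ref{thmturanstrong}. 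A final minor slip: connectivity of the extremal graph is not given by Perron--Frobenius; it follows from maximality, since adding an edge between two components keeps the graph $K_{r+1}$-free and strictly increases the spectral radius.
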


Theorem \ref{thm460} implies Wilf's result (\ref{eqwilf}).
It should be mentioned that
the spectral version of  Tur\'{a}n's theorem was
also studied independently by Guiduli in his PH.D. dissertation \cite[pp. 58--61]{Gui1996}.
In 2021,
Lin, Ning and Wu \cite[Theorem 1.4]{LNW2021}
proved a generalization of Theorem \ref{thmnosal} for
non-bipartite triangle-free graphs (Theorem \ref{thmLNW}).
In this paper, we shall extend the result of Lin, Ning and Wu to
non-$r$-partite $K_{r+1}$-free graphs; see Theorem \ref{thm214}.
Our result is also a refinement on Theorem \ref{thm460}
in the sense of stability result.

\medskip
Assume that $T_1$, $T_2$, $\ldots ,T_r$ are vertex parts of Tur\'{a}n graph $T_r(n)$
with sizes $t_1,t_2,\ldots ,t_r$, respectively.
Moreover, we may assume further that $\lfloor \frac{n}{r}\rfloor = t_1\le t_2 \le \cdots \le t_r = \lceil \frac{n}{r}\rceil$.
Next, we are going to construct a new graph obtained from $T_r(n)$.

\begin{definition}[The extremal graph] \label{def-Yrn}
Choosing two parts $T_1$ and $T_r$ of the Tur\'{a}n graph $T_r(n)$,
 we add a new edge into the part $T_r$, denote by $uw$, and then remove all edges between $T_1$ and $\{u,w\}$.
Moreover, we connect $u$ to a vertex $v\in T_1$, and connect $w$ to the remaining
vertices of $T_1$.
The resulting graph is denoted by $Y_r(n)$;
see Figure  \ref{fig-5}.
\end{definition}

\begin{figure}[H]
\centering
\includegraphics[scale=0.75]{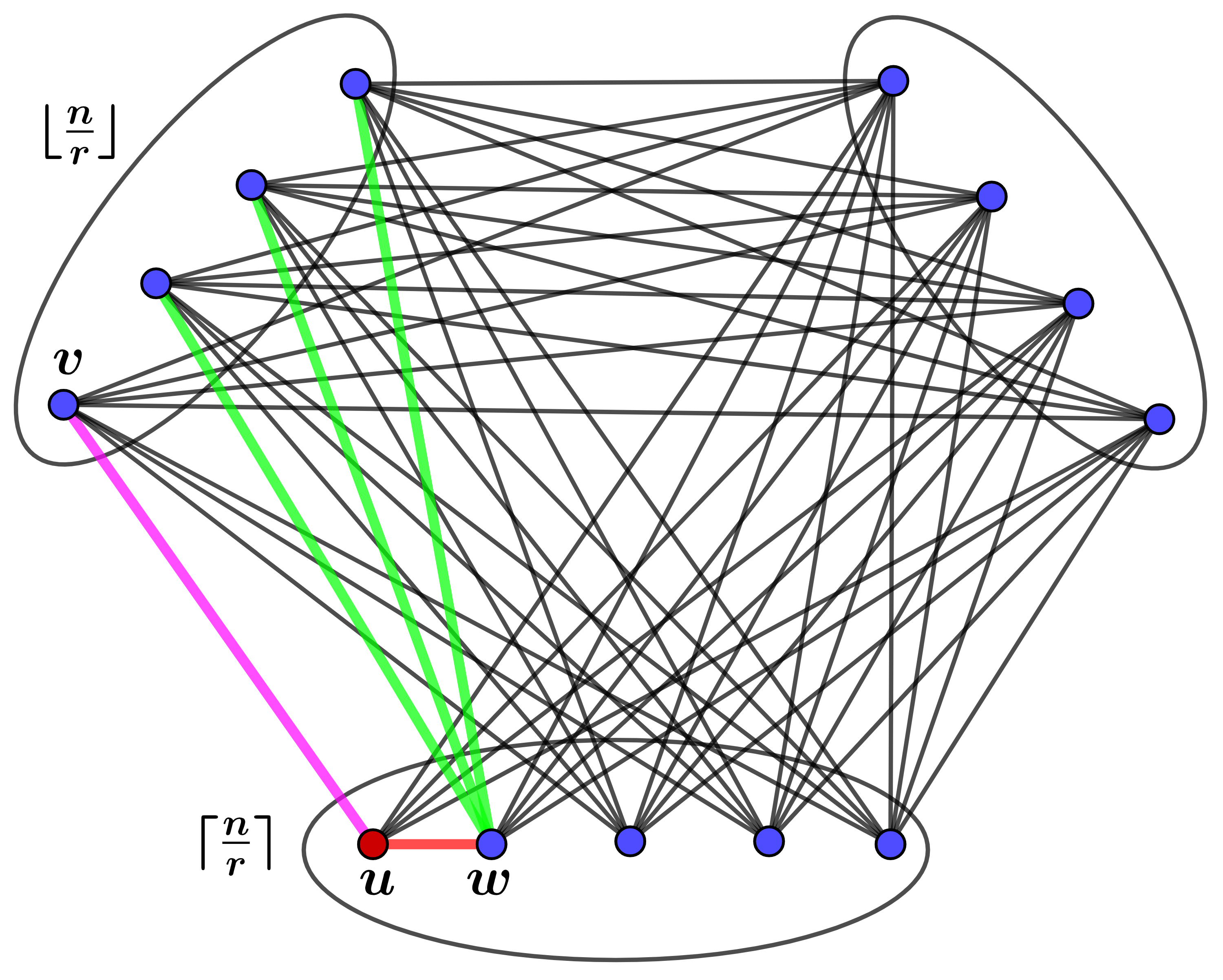}
\caption{The graph $Y_r(n)$ for $n=13 $ and $r=3$.}
    \label{fig-5}
\end{figure}

Now, we present the main result in this paper.

\begin{theorem}[Main result] \label{thm214}
 Let $G$ be an $n$-vertex non-$r$-partite $K_{r+1}$-free graph.
Then
\[  \lambda (G) \le \lambda (Y_r(n)).  \]
Moreover, the equality holds if and only if $G=Y_r(n)$.
\end{theorem}

This article is organized as follows.
In Section \ref{sec2}, we shall give an alternative proof of
the spectral Tur\'{a}n's theorem \ref{thm460}.
To make the proof of Theorem \ref{thm214} more transparent,
we will present a quite different proof of the triangle case of Lin, Ning and Wu \cite{LNW2021} in Section \ref{sec3}.
Inspired by the works \cite{Gui1996,HJZ2013,KN2014},
we shall use mainly the spectral Zykov symmetrization \cite{Zykov1949}.
In Section \ref{sec4},
we shall show the detailed proof of Theorem \ref{thm214}.
In Section \ref{sec5}, we shall discuss the spectral  extremal
problem in terms of the $p$-spectral radius.
Section \ref{sec6} contains some spectral problems for $F$-free graphs with
the chromatic number $\chi (G)\ge t$ and the problems in terms of
the signless Laplacian spectral radius.

\section{Alternative proof of Theorem \ref{thm460}}

\label{sec2}

The proof of Nikiforov \cite{Niki2007laa2} for Theorem \ref{thm460}  is more algebraic
and based on the characteristic polynomial of the complete $r$-partite graph.
Moreover, his proof relies on an inequality \cite{Niki2002cpc} relating the spectral radius and
the number of cliques,
as well as  an old theorem of Zykov \cite{Zykov1949} (see Erd\H{o}s \cite{Erd1962}), which asserts that
$k_s(G) \le k_s(T_r(n))$ for every $s\ge 2$, where $k_s(G)$ is the number of $s$-cliques in $G$.
This result is viewed as a clique extension of Tur\'{a}n's theorem.

The proof of Guiduli  \cite[pp. 58--61]{Gui1996}  for Theorem \ref{thm460}  is completely different from that of Nikiforov.
The main idea of  Guiduli's proof  reduces the problem for $K_{r+1}$-free graphs
to that for complete $r$-partite graphs by applying a spectral technique of Erd\H{o}s' degree majorization algorithm \cite{Erd1970}.
In this way, it is sufficient to show that
the Tur\'{a}n graph $T_r(n)$ attains the maximum spectral radius
among all complete $r$-partite graphs;
see, e.g., \cite{HJZ2013,KN2014} for more spectral applications,
and \cite{Fur2015,BBCLMS2017} for related topics.

\medskip 
In this section, we shall provide an alternative proof of
Theorem \ref{thm460}.
The proof is motivated by  the papers \cite{Gui1996,HJZ2013,KN2014},
and it is based on a spectral extension of the Zykov symmetrization \cite{Zykov1949},
which is becoming a powerful tool for extremal graph problems; see, e.g.,
\cite{FM2017} for a recent application on
the minimum number of triangular edges.

The following lemma was proved by Feng, Li and Zhang in \cite[Theorem 2.1]{FLZ2007}.

\begin{lemma}[Feng--Li--Zhang, 2007] \label{lem-FLZ}
If $G$ is an $r$-partite graph on $n$ vertices, then
\[  \lambda (G) \le \lambda (T_r(n)), \]
equality holds if and only if  $G$
is the $r$-partite Tur\'{a}n graph $T_r(n)$.
\end{lemma}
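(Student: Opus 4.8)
The plan is to reduce the problem to complete $r$-partite graphs, and then to show by a balancing (vertex-moving) argument that among all complete $r$-partite graphs on $n$ vertices the balanced one $T_r(n)$ uniquely maximizes the spectral radius. First I would observe that if $G$ is $r$-partite with color classes $V_1,\ldots,V_r$ of sizes $n_1,\ldots,n_r$, then $G$ is a subgraph of the complete $r$-partite graph $H=K_{n_1,\ldots,n_r}$ obtained by adding all edges between distinct classes. Since $0\le A(G)\le A(H)$ entrywise, the Perron--Frobenius theory recalled in the excerpt gives $\lambda(G)\le \lambda(H)$. Hence it suffices to prove $\lambda(K_{n_1,\ldots,n_r})\le \lambda(T_r(n))$ while keeping track of when equality can occur.

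For a complete $r$-partite graph $H=K_{n_1,\ldots,n_r}$ with at least two nonempty parts (so that $H$ is connected), let $\bm{x}>0$ be the Perron eigenvector for $\lambda:=\lambda(H)$. Vertices in the same class have identical neighborhoods, so by uniqueness of the positive eigenvector they share a common value; write $x_k$ for the value on class $k$ and set $S=\sum_{l}n_lx_l>0$. The eigen-equation reads $\lambda x_k = S-n_kx_k$, whence $x_k=S/(\lambda+n_k)$, so larger classes carry smaller entries. Now suppose two classes are unbalanced, say $n_i\ge n_j+2$, and form $H'$ by moving one vertex $u$ from class $i$ to class $j$. Using $\bm{x}$ as a test vector in the Rayleigh quotient for $H'$ (the norm is unchanged), only the edges incident to $u$ change, and a short computation gives
\[
\bm{x}^{T}A(H')\bm{x}-\bm{x}^{T}A(H)\bm{x}=2x_i\big[(n_i-1)x_i-n_jx_j\big].
\]
Substituting $x_i=S/(\lambda+n_i)$ and $x_j=S/(\lambda+n_j)$, the bracket has the same sign as $\lambda(n_i-1-n_j)-n_j$.

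The key point, and the step I expect to be the main obstacle, is to certify that this quantity is strictly positive. Since $n_i-1-n_j\ge 1$, it suffices to know that $\lambda>n_j$; and indeed $H$ contains $K_{n_i,n_j}$ as a subgraph, so $\lambda\ge \sqrt{n_in_j}>n_j$ because $n_i>n_j$. Therefore moving a vertex from a strictly larger class to a smaller one strictly increases the Rayleigh quotient, so $\lambda(H')>\lambda(H)$. Iterating such moves until all class sizes differ by at most one turns $H$ into $T_r(n)$ and strictly raises the spectral radius at every step where a move is possible; hence $\lambda(K_{n_1,\ldots,n_r})\le \lambda(T_r(n))$, with equality only for the balanced sizes, i.e. only when $K_{n_1,\ldots,n_r}=T_r(n)$.

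Finally I would settle the equality case by chaining the two inequalities $\lambda(G)\le\lambda(H)\le\lambda(T_r(n))$. Equality in the second forces $H=T_r(n)$ by the strict balancing above. For the first, note that $T_r(n)$ is connected (for $n\ge r$ all its parts are nonempty), so $A(T_r(n))$ is irreducible; since $A(G)\le A(T_r(n))$ entrywise, the strict monotonicity of the Perron root under adding an edge to a connected graph forces $A(G)=A(T_r(n))$, that is $G=T_r(n)$. The small cases (for instance $n<r$, where $T_r(n)=K_n$) are immediate and can be disposed of directly.
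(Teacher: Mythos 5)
You should first note that the paper itself contains no proof of this lemma: it is imported as a known result of Feng, Li and Zhang \cite{FLZ2007}, whose proof proceeds via the characteristic polynomial of a complete multipartite graph, with \cite{KNY2015} cited for an alternative. Your argument is therefore necessarily a different route, and it is a correct, self-contained one: (i) pass from $G$ to the complete $r$-partite graph $H=K_{n_1,\ldots,n_r}$ by entrywise monotonicity of the Perron root; (ii) compute the Perron vector of $H$ classwise ($x_k=S/(\lambda+n_k)$) and show that moving one vertex from a class of size $n_i\ge n_j+2$ to a class of size $n_j$ strictly increases the Rayleigh quotient, the increment being $2x_i\bigl[(n_i-1)x_i-n_jx_j\bigr]$ with sign that of $\lambda(n_i-1-n_j)-n_j$; (iii) settle equality via irreducibility of $A(T_r(n))$. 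I checked the displayed identity and the sign computation; both are right, and the iteration terminates since each move strictly decreases $\sum_l n_l^2$. Interestingly, your balancing step is close in spirit to what the paper does elsewhere for its own purposes: Case 1 in the proof of Lemma \ref{lem42} is exactly such a part-balancing argument with a test vector (attributed there to \cite{KNY2015}), and the ``fixed positive test vector plus strict Rayleigh increase'' mechanism is the engine behind the spectral Zykov symmetrization of Sections \ref{sec2}--\ref{sec4}. What your route buys is a short, determinant-free proof with a transparent equality case; what the characteristic-polynomial route buys is explicit spectral data for $K_{t_1,\ldots,t_r}$, which the paper records in the remark following Lemma \ref{lem42} and reuses there.

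One point needs a one-line patch. An $r$-partite graph may use fewer than $r$ classes, and this is precisely the situation in which the paper applies the lemma (there $G$ is complete $t$-partite with $t\le r$, viewed as $r$-partite with $r-t$ empty classes); so your balancing must also move vertices into an empty class, i.e.\ $n_j=0$. In that case your certificate $\lambda\ge\sqrt{n_in_j}>n_j$ collapses to $0>0$ and proves nothing. But no certificate is needed there: the bracket equals $(n_i-1)x_i>0$ outright, since $n_i\ge n_j+2=2$ and $x_i>0$ (equivalently, $\lambda(n_i-1-n_j)-n_j=\lambda(n_i-1)>0$ because the connected graph $H$ contains an edge, so $\lambda\ge 1$). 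With this case added, together with the trivial disposal of the edgeless graph (only one nonempty class), your proof is complete and correct.
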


Now, we present our alternative proof of Theorem \ref{thm460}.

\begin{proof}[{\bf Proof of Theorem \ref{thm460}}]
Let $G$ be a $K_{r+1}$-free graph on $n$ vertices
with maximum value of the spectral radius.
Firstly, we  show that $G$ is a connected graph.
Otherwise, if $G$ is not connected, then adding a new edge between a component attaining the spectral radius
of $G$ and any other component will strictly increase the spectral radius of $G$,
and it does not create a copy of $K_{r+1}$.  Hence we get a new $K_{r+1}$-free graph with larger spectral radius,
which contradicts with the choice of $G$.
Since $G$ is connected,
we can take $\bm{x} \in \mathbb{R}^n$ as a positive unit eigenvector of $\lambda (G)$. Hence, we have
\[  \lambda (G)=  2
\sum_{\{i,j\} \in E(G)} x_ix_j . \]

Our goal is to show that $G$ is the Tur\'{a}n graph $T_r(n)$.
By Lemma \ref{lem-FLZ}, it suffices to show that
$G$ is a complete $r$-partite graph.
Suppose on the contrary that
$G$ is not complete $r$-partite.
Then there are three vertices $u,v,w \in V(G)$  such that
 $vu\notin E(G)$ and $uw\notin E(G)$, while $vw\in E(G)$.
 (This reveals that the non-edge relation
 between vertices is not an equivalent binary relation, as it does not satisfy the transitivity.)
 Throughout the paper,
 we denote by $s_G(v,\bm{x}) $ the sum of weights of
 vertices in $N_G(v)$. Namely,
 \[  \boxed{s_G(v,\bm{x}) :=\sum_{i\in N_G(v)} x_i.} \]

 \begin{figure}[htbp]
\centering
\includegraphics[scale=0.8]{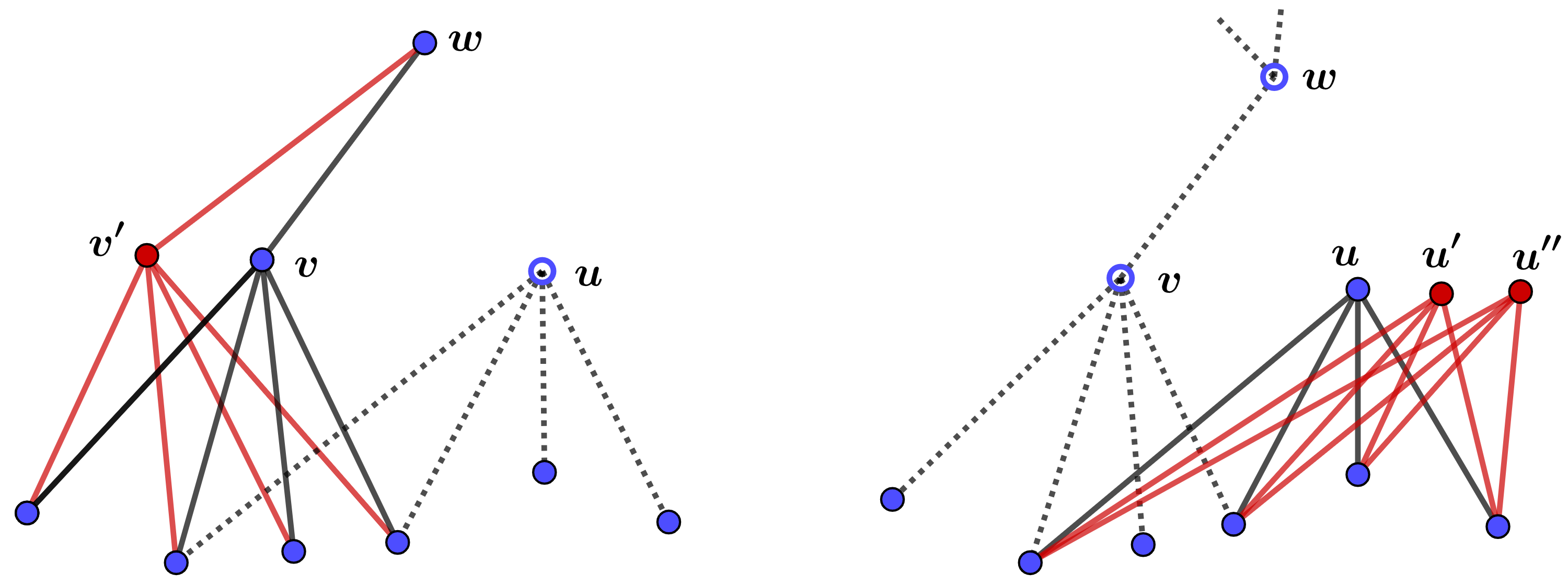}
\caption{The spectral Zykov symmetrization.} \label{fig-1}
\end{figure}

{\bf Case 1.}~ $s_G(u,\bm{x}) <  s_G(v,\bm{x}) $ or $ s_G(u,\bm{x})
< s_G(w,\bm{x}) $.

We may assume that $s_G(u,\bm{x}) <  s_G(v,\bm{x})$.
Then we duplicate the vertex $v$,
that is, we create a new vertex $v'$ which
has exactly the same neighbors as $v$, but $vv'$
is not an edge, and we delete the vertex $u$ and its incident edges;
see the left graph in Figure \ref{fig-1}.
Moreover, we distribute the value $x_u$ to the new vertex $v'$,
and keep the other coordinates of $\bm{x}$ unchanged.
It is not hard to verify that
 the new graph $G'$
 has still no copy of $K_{r+1}$ and
 \begin{equation*}
 \begin{aligned}
  \lambda (G')\ge
 2\sum_{\{i,j\} \in E(G')} x_ix_j
 &= 2 \sum_{\{i,j\} \in E(G)} x_ix_j -
 2x_u s_G(u,\bm{x})  + 2x_u s_G(v,\bm{x})\\
 &> 2 \sum_{\{i,j\} \in E(G)} x_ix_j
 =\lambda (G),
 \end{aligned}
 \end{equation*}
where we used the positivity of vector $\bm{x}$.
This contradicts with the choice of $G$.

{\bf Case 2.}~ $s_G(u,\bm{x}) \ge s_G(v,\bm{x})$ and
$s_G(u,\bm{x}) \ge s_G(w,\bm{x})$.

We copy the vertex $u$ twice, and delete both
$v$ and $w$
with their incident edges;
see the right graph in Figure \ref{fig-1}.
Similarly, we distribute the value $x_v$ to the new vertex $u'$,
and $x_w$ to the new vertex $u''$,
and keep the other coordinates of $\bm{x}$ unchanged.
Moreover, the new graph
$G''$ contains no copy of $K_{r+1}$ and
 \begin{equation*}
 \begin{aligned}
  \lambda (G'')\ge
2\sum_{\{i,j\} \in E(G'')} x_ix_j
 &=2
 \sum_{\{i,j\} \in E(G)} x_ix_j  - 2x_v s_G(v,\bm{x})  - 2x_w s_G(w,\bm{x})   \\
 & \quad \quad  \quad  \,\,\,\, + 2x_vx_w +
 2x_v s_G(u,\bm{x})+ 2x_w s_G(u,\bm{x}) \\
 &> \sum_{i=1}^n x_i s_G(i,\bm{x})
 =\lambda (G).
 \end{aligned}
 \end{equation*}
 So we get a contradiction again.
\end{proof}

We  conclude  that the spectral Zykov's symmetrization starts with a
$K_{r+1}$-free graph $G$, and at each step
takes two {\it non-adjacent} vertices $v_i$ and $v_j$
such that $s_G(v_i, \bm{x}) > s_G(v_j, \bm{x})$, and deleting all edges incident to $v_j$,
and adding new edges between vertex $v_j$ and the neighborhood $N(v_i)$.
We do the same if $s_G(v_i, \bm{x}) = s_G(v_j, \bm{x})$ and
$N(v_i) \neq N(v_j)$ for $i<j$.
The spectral  Zykov's symmetrization does not increase
the size of the largest clique and
does not decrease the spectral radius\footnote{Combining Rayleigh's formula or Lagrange's multiplier method,
one can show further that the spectral radius will increase strictly whenever all  coordinates of the vector $\bm{x}$ are positive.}.
When the process terminates, it yields a  complete multipartite graph with at most $r$ vertex parts.
Otherwise, there are three vertices $u,v,w\in V(G)$ such that
$vu\notin E(G)$ and $uw\notin E(G)$ but $vw\in E(G)$.
Applying the same argument in the proof of Theorem \ref{thm460},
we can get a new graph with larger spectral radius, a contradiction.

We illustrate the difference between
the spectral Erd\H{o}s degree majorization algorithm
and  the spectral Zykov symmetrization.
Recall that the spectral Erd\H{o}s degree majorization algorithm asks us to
choose a vertex $v\in V(G)$ with the maximum value of $s_G(v ,\bm{x})$
among all vertices of $G$, and
 remove all edges incident to vertices of $V(G)\setminus ( N_G(v) \cup \{v\})$, and then add all edges between $N_G(v)$ and $V(G)\setminus N_G(v)$.
This operation makes each vertex of $V(G)\setminus (N_G(v)\cup \{v\})$
 being a copy of the vertex $v$.
 Since $G$ is $K_{r+1}$-free, we see that
 the subgraph of $G$ induced by $N_G(v)$ is $K_r$-free.
 We denote by $V_1=V(G) \setminus N_G(v)$.
 Next, we do the same operation on vertex set ${V_1^c}=N_G(v)$.
More precisely, we further choose a vertex $u\in {V_1^c}$
with the maximum value of $s_G(u,\bm{x})$ over all vertices of $ V_1^c$,
and  remove all edges incident to vertices $V_1^c \setminus
(N_{V_1^c}(u) \cup \{u\})$, and then add all edges between
$N_{V_1^c}(u)$ and $V_1^c \setminus N_{V_1^c}(u)$.
Using this operation repeatedly,
we get a complete $r$-partite graph $H$ on the same vertex set $V(G)$.
Furthermore, one can verify that
the majorization inequality $s_G(v,\bm{x}) \le s_H(v, \bm{x})$ holds
for every vertex $v\in V(G)$; see, e.g., \cite{Gui1996, HJZ2013, KN2014}.

The spectral Erd\H{o}s  majorization algorithm
and  the spectral Zykov symmetrization share some similarities.
For example,  these two operations
ask us to compare the sum of weights of neighbors,
and turn a $K_{r+1}$-free graph to a complete $r$-partite graph.
Importantly, these two operations do not create a copy of $K_{r+1}$
and do not decrease the value of spectral radius.
The only difference between them is that one step of the Erd\H{o}s operation will change
  many vertices with its incident edges, while one step of the Zykov operation will  change only two vertices with its incident edges.
  This subtle difference will bring great convenience in later Sections \ref{sec3} and \ref{sec4}. As a matter of fact, at each step of the Erd\H{o}s operation,
there are many times of actions of the Zykov operation.
In other words, each step of the Erd\H{o}s operation
can be decomposed as a series of the Zykov operation.

\section{Refinement for triangle-free graphs}

\label{sec3}

Mantel's Theorem  has many interesting applications and
miscellaneous generalizations in the literature; see, e.g., \cite{Bollobas78,BT1981,Bon1983,Sim13} and references therein.
In particular,
Mantel's  theorem
was refined in the sense of the following stability form.

\begin{theorem}[Erd\H{o}s] \label{thmErd}
Let $G$ be an $n$-vertex triangle-free graph.
If $G$ is not bipartite, then
\[ e(G)\le \left\lfloor \frac{(n-1)^2}{4} \right\rfloor+1 .\]
\end{theorem}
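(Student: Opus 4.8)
The plan is to locate a shortest odd cycle and count edges by splitting the vertices into this cycle and the rest. Since $G$ is non-bipartite it contains an odd cycle, and since $G$ is triangle-free the shortest such cycle $C=v_1v_2\cdots v_{2k+1}$ has length $2k+1\ge 5$, so $k\ge 2$. I would first record that $C$ is chordless: a chord $v_iv_j$ splits $C$ into two cycles whose lengths sum to $2k+3$, so one of them is odd and, since both are at most $2k$, strictly shorter than $2k+1$, contradicting minimality. Hence the only edges inside $V(C)$ are the $2k+1$ cycle edges.

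The step I expect to be the main obstacle is the claim that every vertex $u\notin V(C)$ has at most two neighbours on $C$. If $u$ is adjacent to $v_a$ and $v_b$, the two arcs of $C$ between them have lengths $p$ and $q$ with $p+q=2k+1$, so exactly one of $p,q$ is odd; joining the odd arc to the length-$2$ path $v_a\,u\,v_b$ produces an odd cycle of length (odd arc)$+2$, which by minimality is at least $2k+1$. Thus the even arc has length at most $2$, and being a positive even number it equals $2$ (in particular $v_a,v_b$ are non-adjacent, as adjacency would force a triangle). So any two neighbours of $u$ lie at cyclic distance exactly $2$; equivalently they form a clique in the graph on $V(C)$ joining vertices at distance $2$, which for odd $2k+1\ge 5$ is itself a single $(2k+1)$-cycle and hence triangle-free. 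Therefore that clique has size at most $2$, giving the bound.

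With these two facts the counting is routine. Writing $R=V(G)\setminus V(C)$ and $m=|R|=n-(2k+1)$, I split
\[
 e(G)=e(C)+e(C,R)+e(G[R]).
\]
Here $e(C)=2k+1$ by chordlessness, $e(C,R)\le 2m$ by the neighbour bound, and $e(G[R])\le\lfloor m^2/4\rfloor$ by Mantel's Theorem~\ref{thmMan} applied to the triangle-free graph $G[R]$. Summing and using $2k+1=n-m$ gives
\[
 e(G)\le n+m+\left\lfloor \tfrac{m^2}{4}\right\rfloor,
\]
an expression increasing in $m$; since $m=n-2k-1$ decreases in $k$, it is maximized at the smallest admissible value $k=2$.

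Finally I would check the arithmetic at $k=2$, where $m=n-5$. Because $8m+16\equiv 0\pmod 4$ one has $\lfloor (m+4)^2/4\rfloor=\lfloor m^2/4\rfloor+2m+4$, whence
\[
 n+m+\left\lfloor \tfrac{m^2}{4}\right\rfloor
 =2m+5+\left\lfloor \tfrac{m^2}{4}\right\rfloor
 =\left\lfloor \tfrac{(n-1)^2}{4}\right\rfloor+1,
\]
which is exactly the claimed bound (and for $k\ge 3$ the same expression is strictly smaller). The only genuinely delicate point is the neighbour-counting lemma; the chordless observation, the Mantel step, and the floor computation are all elementary.
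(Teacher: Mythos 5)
Your proof is correct, and it is genuinely different from what the paper offers --- because the paper offers no proof at all: Theorem \ref{thmErd} is stated with an attribution to Erd\H{o}s and a citation to \cite[Page 306, Exercise 12.2.7]{BM2008}, and the surrounding text only establishes sharpness by exhibiting the extremal construction of Figure \ref{fig-2}. Your argument supplies the missing proof by the classical route, and every step checks out: a shortest odd cycle $C$ has length $2k+1\ge 5$ and is chordless (a chord creates two cycles whose lengths sum to the odd number $2k+3$, each at most $2k$, so one is a shorter odd cycle); every vertex outside $C$ has at most two neighbours on $C$, since any two such neighbours must lie at cyclic distance exactly $2$ (minimality forces the odd arc to have length at least $2k-1$, and triangle-freeness rules out arcs of length $1$), and a set of vertices pairwise at distance $2$ on an odd cycle of length at least $5$ has size at most $2$ because the distance-$2$ graph is again a $(2k+1)$-cycle; then $e(G)\le (2k+1)+2m+\lfloor m^2/4\rfloor = n+m+\lfloor m^2/4\rfloor$ with $m=n-2k-1$, which is maximized at $k=2$, where the identity $\lfloor (m+4)^2/4\rfloor=\lfloor m^2/4\rfloor+2m+4$ yields exactly $\lfloor (n-1)^2/4\rfloor+1$ (as a sanity check, for $n=5,6,7$ the bound evaluates to $5,7,10$, matching the formula). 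As for what each approach buys: the paper's real interest is the spectral analogue (Theorems \ref{thmLNW} and \ref{thm214}), so for the edge version a citation suffices; your proof makes the statement self-contained and moreover explains structurally where the extremal graphs come from --- equality forces $k=2$, equality in the two-neighbour bound, and equality in Mantel's Theorem \ref{thmMan} on $G-V(C)$ --- which is precisely the pentagon-based shape of the graphs described around Figure \ref{fig-2}.
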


\begin{figure}[H]
\centering
\includegraphics[scale=0.75]{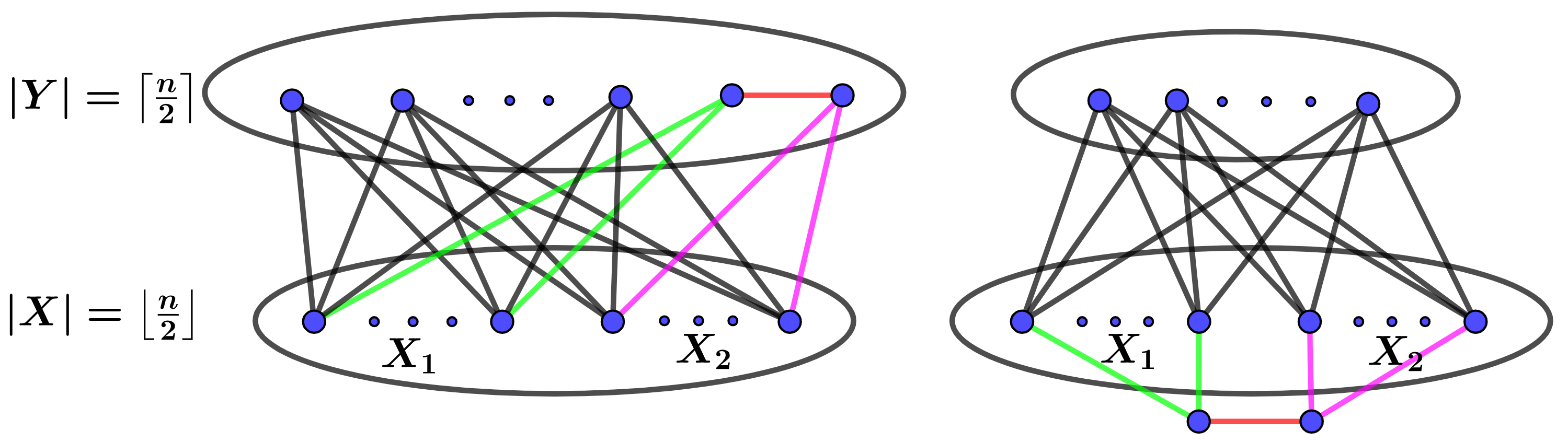}  \\
\caption{Two  drawings of extremal graphs in Theorem \ref{thmErd}.}
    \label{fig-2}
\end{figure}

It is said that this stability result attributes to Erd\H{o}s;
see \cite[Page 306]{BM2008}.
The bound in Theorem \ref{thmErd} is best possible and
the extremal graph is not unique.
Taking two vertex sets
$X$ and $Y$ with $|X|= \lfloor \frac{n}{2} \rfloor$
and $|Y|= \lceil \frac{n}{2} \rceil$,
we choose two  vertices $u,v \in Y$ and  join them,
 then we put every edge between $X$ and $Y\setminus \{u,v\}$.
Partitioning $X$ into two parts $X_1$ and $X_2$ {\it arbitrarily}
(this shows that the extremal graph is not unique),
 we connect $u$ to every vertex in $X_1$, and
$v$ to every vertex in $X_2$; see Figure \ref{fig-2}.
This yields a triangle-free graph $G$ and
 $e(G)= \lfloor \frac{n^2}{4}\rfloor - \lfloor \frac{n}{2}\rfloor +1
  = \lfloor \frac{(n-1)^2}{4} \rfloor +1 $. Note that $G$ has a $5$-cycle,
so it is not bipartite.

\medskip
In 2021, Lin, Ning and Wu \cite[Theorem 1.4]{LNW2021} proved
a  generalization on spectral Mantel theorem for non-bipartite graphs.
Let $SK_{\lfloor \frac{n-1}{2}\rfloor,\lceil \frac{n-1}{2}\rceil}$ denote the
subdivision of the complete bipartite graph
$K_{\lfloor \frac{n-1}{2}\rfloor,\lceil \frac{n-1}{2}\rceil}$ on one edge;
see  Figure \ref{fig-3}.
Clearly, $SK_{\lfloor \frac{n-1}{2}\rfloor,\lceil \frac{n-1}{2}\rceil}$
is one of the extremal graphs in Theorem \ref{thmErd}
by setting $|X_1|=\lfloor \frac{n}{2}\rfloor -1$ and $|X_2|=1$
in  Figure \ref{fig-2}.

\begin{figure}[htbp]
\centering
\includegraphics[scale=0.75]{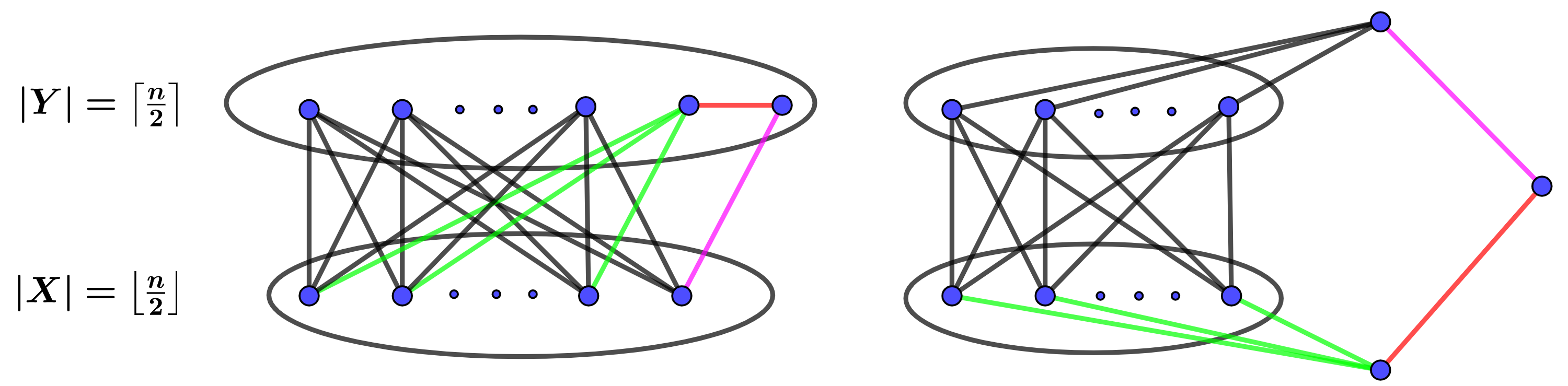}
\caption{Two  drawings of the graph $SK_{\lfloor \frac{n-1}{2}\rfloor,\lceil \frac{n-1}{2}\rceil}$.}
    \label{fig-3}
\end{figure}

\begin{theorem}[Lin--Ning--Wu, 2021] \label{thmLNW}
Let $G$ be an $n$-vertex graph.
If $G$ is triangle-free and non-bipartite,
then
\[  \lambda (G) \le \lambda (SK_{\lfloor \frac{n-1}{2}\rfloor,\lceil \frac{n-1}{2}\rceil}),  \]
equality holds if and only if $G=SK_{\lfloor \frac{n-1}{2}\rfloor,\lceil \frac{n-1}{2}\rceil}$.
\end{theorem}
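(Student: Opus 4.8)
The plan is to run the extremal method together with the spectral Zykov symmetrization developed in Section \ref{sec2}, the only genuinely new ingredient being that we must protect a single odd cycle so as not to lose non-bipartiteness. First I would take $G$ to be an $n$-vertex triangle-free non-bipartite graph of maximum spectral radius and show that $G$ is connected: if $G$ were disconnected, adding an edge joining a component of largest spectral radius to another component neither creates a triangle nor destroys the odd cycle witnessing non-bipartiteness, yet it strictly increases $\lambda(G)$, a contradiction. I then fix a positive unit eigenvector $\bm{x}$ for $\lambda(G)$, so that $\lambda(G)x_v = s_G(v,\bm{x})$ for every $v$, and fix a shortest odd cycle $C=v_1v_2\cdots v_{2k+1}$; such a cycle exists and has length $2k+1\ge 5$ since $G$ is triangle-free and non-bipartite.

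Next I would apply the spectral Zykov symmetrization of Section \ref{sec2}, but only to pairs of non-adjacent vertices whose symmetrization leaves the cycle $C$ intact. Each such step keeps $G$ triangle-free, does not decrease $\lambda(G)$, and keeps $G$ non-bipartite because $C$ survives; by maximality we may therefore assume $G$ is stable under every admissible step. The aim of this stage is the structural claim that a graph stable in this sense is exactly a blow-up of an odd cycle: the non-adjacency relation becomes an equivalence relation apart from the unavoidable twisting carried by $C$, so that $V(G)$ partitions into independent classes $V_1,\dots,V_{2k+1}$ with complete bipartite connections between consecutive classes and none otherwise. Here symmetrizing two vertices in non-consecutive classes simply moves a vertex from the lighter class to the heavier one, keeping the blow-up structure provided no class is emptied; the process halts precisely when the sizes can no longer be shifted without either emptying a class (killing the odd cycle) or decreasing $\lambda(G)$.

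It then remains to optimize over odd-cycle blow-ups. I would first show that the odd girth must equal $5$: in a blow-up of $C_{2k+1}$ the two heaviest adjacent classes can absorb at most $n-(2k-1)$ vertices, and a short computation with the associated quotient matrix shows that the maximum achievable $\lambda$ strictly decreases as $k$ grows, so $k=2$ dominates and $G$ is a blow-up of $C_5$ with class sizes $(n_1,\dots,n_5)$. A perturbation argument on $\bm{x}$ — comparing entries and moving a vertex from a class contributing less to the Rayleigh quotient into an adjacent heavier class, in the balancing spirit of Lemma \ref{lem-FLZ} — then forces three of the classes down to singletons and the two remaining adjacent classes to be as balanced as possible, which is exactly the blow-up describing $SK_{\lfloor\frac{n-1}{2}\rfloor,\lceil\frac{n-1}{2}\rceil}$. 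Tracking the strict increase of $\lambda(G)$ at every non-trivial step yields the uniqueness of the extremal graph.

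The main obstacle I anticipate is the interface between symmetrization and non-bipartiteness: I must argue that protecting one shortest odd cycle is enough, i.e. that the admissible steps really do drive $G$ to a genuine odd-cycle blow-up rather than stalling in an intermediate configuration, and that no admissible step secretly shortens the odd girth in an uncontrolled way. The accompanying quantitative heart — proving that among all $C_{2k+1}$-blow-ups on $n$ vertices the maximum spectral radius is both decreasing in $k$ and, for $k=2$, uniquely attained by the $(1,1,b{-}1,a{-}1,1)$ size pattern — is the computation I expect to be the most delicate, since it is exactly the place where the precise extremal graph $SK_{\lfloor\frac{n-1}{2}\rfloor,\lceil\frac{n-1}{2}\rceil}$ is singled out.
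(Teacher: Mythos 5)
Your proposal hinges on the structural claim that a triangle-free graph which is stable under all ``admissible'' (cycle-protecting) Zykov steps must be a blow-up of an odd cycle, and this claim is false; it is exactly the ``stalling'' you worried about, and it really happens. Concretely, let $C=v_1v_2\cdots v_7$ be a $7$-cycle, let $P$ and $Q$ be two independent sets of size $N$, join every vertex of $P$ to $v_1$ and to every vertex of $Q$, and join every vertex of $Q$ to $v_4$. This graph is triangle-free, its odd girth is $7$ (so $C$ is a shortest odd cycle), and it is not a blow-up of any odd cycle: its twin-classes $\{v_1\},\dots,\{v_7\},P,Q$ form a theta-shaped quotient, not a cycle. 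Yet it is stable with respect to $C$: any step $Z_{w,\cdot}$ with $w\in V(C)$ is inadmissible, because $N(p)$ (for $p\in P$) and $N(q)$ (for $q\in Q$) contain only one vertex of $C$, so re-attaching a cycle vertex necessarily destroys $C$; and for the admissible steps $Z_{p,u}$ with $u\in V(C)$ one checks, writing $x$ for the Perron vector and normalizing $x_p=x_q=1$, that $\lambda< N+1$ and hence $x_{v_1}=x_{v_4}=(\lambda-N)x_p<x_p$ while $x_{v_i}=O(1/N)$ for $i\neq 1,4$; thus $s_G(p,\bm{x})>s_G(u,\bm{x})$ for every non-neighbor $u$ of $p$ on $C$, and no admissible step increases the spectral radius. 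So Zykov-stability plus protection of one shortest odd cycle does not drive the graph to an odd-cycle blow-up, and nothing in your outline (which invokes only this stability) can exclude such configurations. The graph above is of course not extremal---one may add the edge $v_2v_5$ without creating a triangle---but your argument never uses more than stability, so it cannot tell this graph apart from the true extremal one.

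The paper avoids this trap by running the symmetrization \emph{without} any protection and using extremality in the opposite direction: every improving Zykov step on the extremal graph $G$ keeps it triangle-free and strictly increases $\lambda$, so by maximality such a step must destroy non-bipartiteness entirely. Hence either no improving pair exists (forcing $G$ to be complete bipartite, a contradiction), or there are non-adjacent $u,v$ with $Z_{u,v}(G)$ bipartite, which gives that $G\setminus\{u\}$ is bipartite. That single conclusion hands you the five-class structure for free: with $V(G)\setminus\{u\}=V_1\cup V_2$, $C=N(u)\cap V_1$, $D=N(u)\cap V_2$, $A=V_1\setminus C$, $B=V_2\setminus D$, triangle-freeness kills all $C$--$D$ edges and maximality completes the pairs $(A,B),(A,D),(B,C)$, i.e.\ $G$ is already a $C_5$-blow-up, with no odd-girth comparison ever needed. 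The rest of the paper's proof is then a local vertex-moving argument forcing three of the classes to be singletons, followed by Lemma \ref{lempp}, which compares only the one-parameter family $SK_{a,b}$ rather than all $C_5$-blow-ups. So besides the false structural step, your route would also owe two substantial computations (the cross-girth comparison and the optimization over all $C_5$-blow-ups $(n_1,\dots,n_5)$) that the paper's ``one vertex away from bipartite'' idea renders unnecessary; that idea is the ingredient your proposal is missing.
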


Theorem \ref{thmLNW}
is  a corresponding spectral version of Theorem \ref{thmErd},
while the extremal graph in spectral problem is unique;
see \cite{LG2021,LSY2022} for an extension to graphs without short odd cycles.
In this section,
we shall provide a new proof of Theorem \ref{thmLNW}.
One of the key ideas in the proof is to use the spectral Zykov symmetrization,
which provides great convenience
to obtain a clearly approximate structure of the required extremal graph.
Moreover, the ideas  in this proof can benefit us to extend
Theorem \ref{thmLNW} to  $K_{r+1}$-free non-$r$-partite graphs,
which will be discussed in Section \ref{sec4}.
Before starting the proof, we include the following lemma,
which is a direct consequence by computations; see, e.g., \cite[Appendix A]{LNW2021}.

\begin{lemma} \label{lempp}
If $G$ is a graph on $n=a+b+1$ vertices
obtained
from $K_{a,b}$ by subdividing an edge arbitrarily, then
\[  \lambda (G) \le \lambda (SK_{\lfloor \frac{n-1}{2}\rfloor,\lceil \frac{n-1}{2}\rceil}),  \]
equality holds if and only if $G=SK_{\lfloor \frac{n-1}{2}\rfloor,\lceil \frac{n-1}{2}\rceil}$.
\end{lemma}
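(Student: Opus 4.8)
The plan is to reduce the lemma to a one-parameter optimization problem and then resolve it by a monotonicity argument in the spectral radius. Since $K_{a,b}$ is edge-transitive, subdividing any one of its edges yields the same graph up to isomorphism, so the graph $G$ in the statement depends only on the pair $(a,b)$ with $a+b=n-1$; write $SK_{a,b}$ for it. It therefore suffices to prove that, among all admissible pairs $(a,b)$ with $a+b=n-1$, the value $\lambda(SK_{a,b})$ is strictly largest exactly when $\{a,b\}=\{\lfloor\frac{n-1}{2}\rfloor,\lceil\frac{n-1}{2}\rceil\}$, since this balanced pair gives precisely $SK_{\lfloor\frac{n-1}{2}\rfloor,\lceil\frac{n-1}{2}\rceil}$.

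First I would exploit the symmetry of $SK_{a,b}$. Label the subdivided edge $u_0v_0$ and let $w$ be the new vertex on it. The vertex set splits into the five cells $A\setminus\{u_0\}$, $\{u_0\}$, $B\setminus\{v_0\}$, $\{v_0\}$, $\{w\}$, and this partition is equitable, so the Perron vector is constant on each cell; call the five common values $x,p,y,q,z$. Evaluating $\lambda\bm{x}=A(G)\bm{x}$ at one representative of each cell produces five linear relations, among them $\lambda z=p+q$, $\lambda p=(b-1)y+z$, $\lambda q=(a-1)x+z$, together with the two bulk equations governing $x$ and $y$. Because $\lambda(SK_{a,b})$ equals the Perron eigenvalue of the associated $5\times5$ quotient matrix, it is the largest root of the characteristic polynomial obtained by eliminating $x,p,y,q,z$.

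The computational heart is to carry out this elimination and organize the result. Clearing $z$ from $\lambda z=p+q$ and then eliminating the two special coordinates, I expect $a$ and $b$ to enter only symmetrically, so that after simplification the equation takes the form
\[ A(\lambda)=B(\lambda)\,(a-1)(b-1),\qquad B(\lambda)=(\lambda-1)^2(\lambda+2), \]
where $A(\lambda)$ depends on $a,b$ only through the fixed sum $a+b=n-1$. Granting this shape, the monotonicity is short. Note $B(\lambda)>0$ for $\lambda>1$, and $\lambda(SK_{a,b})\ge\lambda(P_3)=\sqrt2>1$ because $SK_{a,b}$ contains the path $u_0wv_0$. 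Now let $P_1<P_2$ be two admissible values of the product $(a-1)(b-1)$ with corresponding Perron roots $\lambda_1,\lambda_2$, and set $\Phi(\lambda):=A(\lambda)-B(\lambda)P_2$. Using $A(\lambda_1)=B(\lambda_1)P_1$ we get $\Phi(\lambda_1)=-B(\lambda_1)(P_2-P_1)<0$, while the leading term of $\Phi$ in $\lambda$ is $\lambda^5$, so $\Phi(\lambda)\to+\infty$; hence the largest root $\lambda_2$ strictly exceeds $\lambda_1$. Thus $\lambda(SK_{a,b})$ is strictly increasing in $(a-1)(b-1)$, which for fixed $a+b=n-1$ is maximized uniquely at the balanced split, giving both the inequality and the uniqueness of the extremal graph.

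The main obstacle is the elimination step itself: confirming that the five Perron relations really collapse to a single equation in which $a,b$ appear only through $a+b$ and $(a-1)(b-1)$, and in particular verifying the clean factorization $B(\lambda)=(\lambda-1)^2(\lambda+2)$ of the coefficient of the product term and its positivity for $\lambda>1$. Once this is in hand the comparison above is routine. A secondary point worth checking is that the Perron eigenvalue is indeed the largest root of the degree-five quotient polynomial and that dividing through by $\lambda$ during the elimination discards only a spurious zero root rather than the root we care about.
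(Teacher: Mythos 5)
Your proposal is correct and takes essentially the same route as the paper: the paper also reduces to the largest root of the degree-five polynomial $F_{a,b}(x)=x^5-(ab+1)x^3+(3ab-2a-2b+1)x-2ab+2a+2b-2$, which is precisely your predicted form $A(\lambda)-B(\lambda)(a-1)(b-1)$ with $A(\lambda)=\lambda^5-(a+b)\lambda^3+(a+b-2)\lambda$ and $B(\lambda)=(\lambda-1)^2(\lambda+2)$, so the factorization you flagged as the main obstacle does hold exactly. The only organizational difference is that you obtain strict monotonicity of $\lambda$ in the product $(a-1)(b-1)$ in one step, whereas the paper compares adjacent splits via $F_{s+2,t+2}(x)-F_{s+1,t+3}(x)=-(x-1)^2(x+2)(t-s+1)$ together with the lower bound $\lambda\ge\sqrt{6}$ from $K_{2,3}$ (your bound $\lambda\ge\sqrt{2}$ from $P_3$ suffices equally well).
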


\begin{proof}
We denote by $SK_{a,b}$ the graph obtained from $K_{a,b}$ by subdividing an edge.
Let $s,t$ be two positive integers with $t\ge s\ge 1$.
It suffices to show that
\[ \lambda (SK_{s+1,t+3})  < \lambda (SK_{s+2,t+2}).  \]
By computation, the spectral radius of $SK_{a,b}$ is the largest root of
\begin{equation*}
\begin{aligned}
F_{a,b}(x) :=x^5 - (ab+1) x^3+ (3ab -2a -2b +1)x -2ab +2a +2b -2.
 \end{aligned}
 \end{equation*}
Hence $\lambda(SK_{s+2,t+2})$ is the largest root of
\[  F_{s+2,t+2}(x)= x^5 - (2s+2t +st +5)x^3 +
(4s+4t +3st +5)x -2s-2t -2st -2. \]
Similarly, $\lambda (SK_{s+1,t+3})$ is the largest root of $F_{s+1,t+3}(x)$.
Note that
\[ F_{s+2,t+2}(x) - F_{s+1,t+3}(x) = - (x-1)^2(x+2)(t-s+1). \]
This implies $  F_{s+2,t+2}(x) < F_{s+1,t+3}(x)$ for every $x>1$.
Since $K_{2,3}$ is a subgraph of $SK_{s+1,t+3}$, we know that
$\lambda (SK_{s+1,t+3}) \ge \lambda (K_{2,3})=\sqrt{6}$. Thus, we have
\[  F_{s+2,t+2}(\lambda (SK_{s+1,t+3})) <
F_{s+1,t+3}(\lambda (SK_{s+1,t+3}))=0. \]
Therefore, we obtain  $\lambda (SK_{s+1,t+3}) < \lambda (SK_{s+2,t+2})$.
\end{proof}

Now we are ready to show our proof of Theorem \ref{thmLNW}.
For two {\it non-adjacent} vertices $u,v\in V(G)$,
we denote the  Zykov symmetrization $Z_{u,v}(G)$
to be the graph obtained from $G$ by replacing $u$ with a twin of $v$, that is,
deleting all edges incident to vertex $u$, and then adding new edges from $u$ to $N_G(v)$.
We can verify that the  Zykov symmetrization does not
increase both the clique number  $\omega (G)$ and
the chromatic number $\chi (G)$.
More precisely,
we have $\omega (Z_{u,v}(G)) = \omega (G \setminus \{u\})$ and
 $\chi (Z_{u,v}(G)) = \chi (G \setminus \{u\})$.
 Let $\bm{x} \in \mathbb{R}^n$ be a {\it positive unit eigenvector}  corresponding to $\lambda (G)$.
Recall that $s_G(v,\bm{x}) :=\sum_{i\in N_G(v)} x_i$ denotes
the sum of weights of
all neighbors of $v$ in $G$.

If $s_G(u,\bm{x}) < s_G(v,\bm{x})$,
then we replace $G$ with $Z_{u,v}(G)$. Apparently,
the spectral Zykov symmetrization does not make triangles.
More importantly, it will increase strictly the spectral radius, since
\begin{equation*}
 \begin{aligned}
\lambda (Z_{u,v}(G)) \ge
 2\sum_{\{i,j\} \in E(Z_{u,v}(G))} x_ix_j
 &= 2 \sum_{\{i,j\} \in E(G)} x_ix_j -
 2x_u s_G(u,\bm{x})  + 2x_u s_G(v,\bm{x})\\
 &> 2 \sum_{\{i,j\} \in E(G)} x_ix_j
 =\lambda  (G).
 \end{aligned}
 \end{equation*}

If $s_G(u,\bm{x}) = s_G(v,\bm{x})$
and $N_G(u)\neq N_G(v)$, then we can apply either $Z_{u,v}$ or $Z_{v,u}$.
In each case, we will get a new graph such that  $N(u)=N(v)$.
Similarly, this operation will increase the spectral radius $\lambda (G)$ strictly.
Indeed, we can  see that
\[  \lambda (Z_{u,v}(G)) \ge
 2\sum_{\{i,j\} \in E(Z_{u,v}(G))} x_ix_j
 = 2 \sum_{\{i,j\} \in E(G)} x_ix_j
 =\lambda  (G). \]
  We claim further that $\lambda (Z_{u,v}(G)) > \lambda (G)$.
 Assume on the contrary that $\lambda (Z_{u,v}(G)) = \lambda (G)$,
then the inequality in above becomes an equality.
 Thus $\bm{x}$ is an eigenvector of $\lambda (Z_{u,v}(G))$,
 namely,  $A(Z_{u,v}(G))\bm{x} = \lambda (Z_{u,v}(G)) \bm{x}
 =\lambda (G)\bm{x}$. Taking any vertex $z\in N_G(v) \setminus N_G(u)$,
 we observe that
 \[   \lambda (Z_{u,v}(G))x_z=\sum_{t\in N_G(z)\cup \{u\}} x_t
 > \sum_{t\in N_G(z)} x_t = \lambda (G)x_z.\]
 Consequently, we get $\lambda (Z_{u,v}(G)) > \lambda (G)$, which
 contradicts with our assumption.
It is worth emphasizing that the positivity of $\bm{x}$ is necessary in above discussions.
Roughly speaking, applying the spectral Zykov symmetrization
will make a $K_{r+1}$-free graph more regular in some sense
according to the weights of the eigenvector.

\begin{proof}[{\bf Proof of Theorem \ref{thmLNW}}]
Let $G$ be a non-bipartite triangle-free graph on $n$ vertices with the largest
spectral radius. Our goal is to show that
$G=SK_{\lfloor \frac{n-1}{2}\rfloor,\lceil \frac{n-1}{2}\rceil}$.
Clearly, we know that $G$ is connected. Otherwise, any addition of
an edge between a component with the maximum spectral radius
and any other component  will strictly increase the spectral radius.
Since $G$ is connected,
there exists  a positive unit eigenvector corresponding to $\lambda (G)$, and then we denote
such a vector by $\bm{x}=(x_1,\ldots ,x_n)^T$,
where $x_i>0$ for every $i$.
Since $G$ is triangle-free,
we  apply repeatedly the spectral Zykov symmetrization for every pair of non-adjacent vertices
until it becomes a bipartite graph. Without  loss of generality,
we may assume that $G$ is triangle-free and non-bipartite, while
$Z_{u,v}(G)$ is bipartite. We  are going to show that $\lambda (G) \le
\lambda (SK_{\lfloor \frac{n-1}{2}\rfloor,\lceil \frac{n-1}{2}\rceil})$,
equality holds if and only if $G=SK_{\lfloor \frac{n-1}{2}\rfloor,\lceil \frac{n-1}{2}\rceil}$.

Since $Z_{u,v}(G)$ is bipartite, we know that $G\setminus \{u\}$ is
bipartite. We denote $V(G)\setminus \{u\}=V_1\cup V_2 $,
where $V_1,V_2$ are disjoint and $|V_1| + |V_2|=n-1$.
Assume that $C=N(u)\cap V_1$ and $D=N(u)\cap V_2$. We denote
$A=V_1 \setminus C$ and $B=V_2\setminus D$.
Since $G$ is triangle-free, there are no edges between parts $C$ and $D$.
As $G$ attains the largest spectral radius, we know that
the pair of parts $(A,B),(A,D)$ and $(B,C)$ are complete bipartite subgraphs;
see Figure \ref{fig-4}.

\begin{figure}[htbp]
\centering
\includegraphics[scale=0.8]{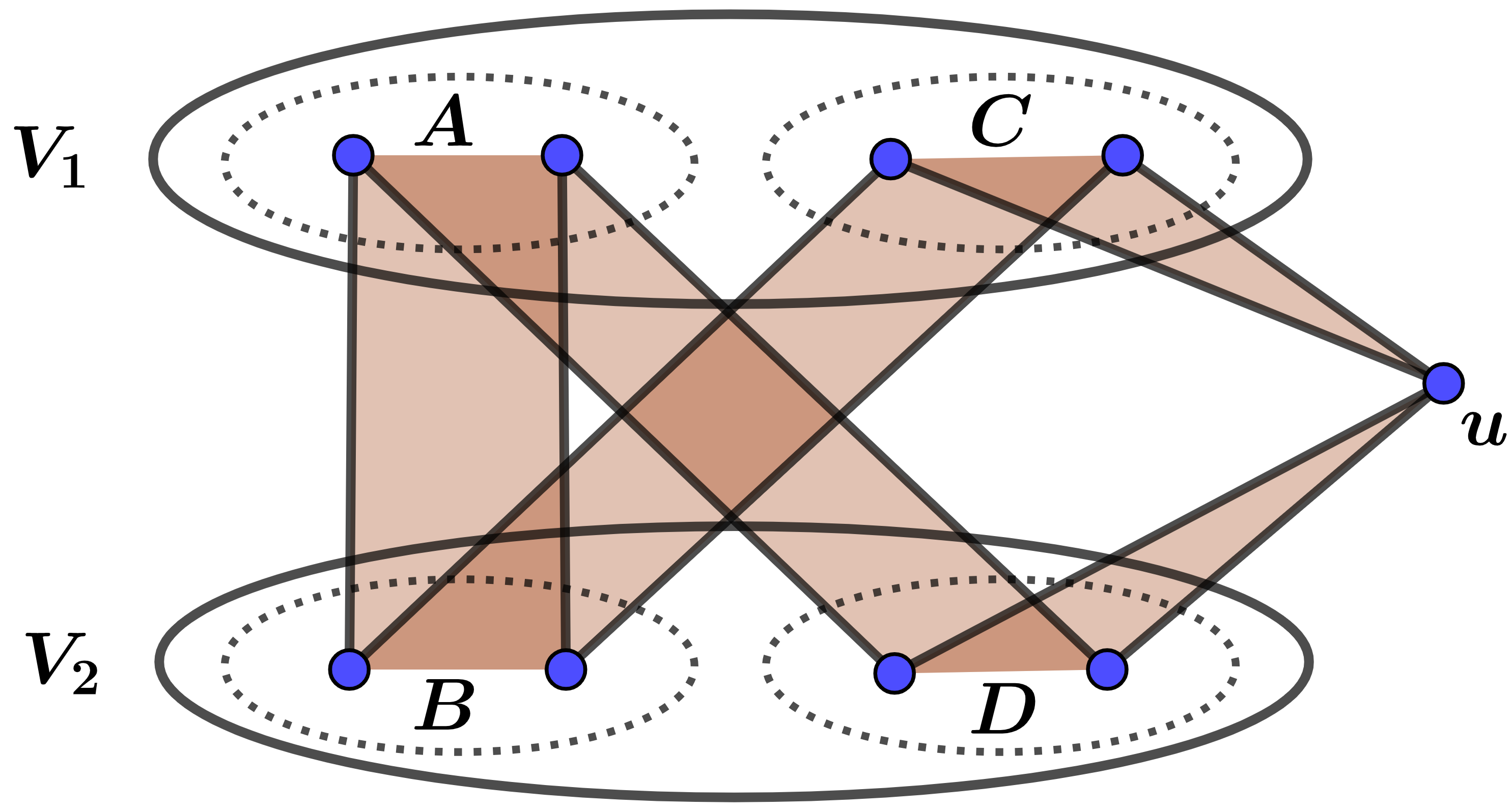}
\caption{An approximate structure of $G$.}
    \label{fig-4}
\end{figure}

Note that each vertex in  $A$ has the same neighborhood,
we know that the coordinates  $\{x_v:v\in A\}$ are all equal.
This property holds similarly for vertices in $B,C$ and $D$ respectively.
Thus, we write $x_a$ for the value of the entries of $\bm{x}$ in vertex set $A$.
And $x_b,x_c $ and $x_d$ are defined similarly.

The remaining steps of our proof are outlined as follows.

\ding{73}~
If $|A|x_a \ge |B|x_b$, then we delete $|C|-1$ vertices in $C$ with its incident edges,
and add $|C|-1$ new vertices to $D$ and connect these vertices to $A\cup \{u\}$.
We keep the weight of these new vertices being $x_c$ and
 denote the new graph by $G'$. We can verify that
\begin{align*}
\lambda (G') &\ge 2 \sum_{\{i,j\}\in E(G)} x_ix_j - 2(|C|-1)|B| x_cx_b
+ 2(|C|-1)|A|x_cx_a  \\
& \ge 2\sum_{\{i,j\}\in E(G)} x_ix_j =\lambda (G).
\end{align*}
In fact, we can further prove that $\lambda (G') > \lambda (G)$.
Otherwise, if $\lambda (G')= \lambda (G)$,
then $\bm{x}$ is the Perron vector of $G'$, that is,
$A(G')=\lambda (G')\bm{x}=\lambda (G)\bm{x}$.
Taking any vertex $z\in A$, we observe that
$\lambda (G')x_z=\sum_{v\in N_{G'}(z)} x_v =
\sum_{v\in N_G(z)} x_v + (|C|-1)x_c > \sum_{v\in N_G(z)} x_v
= \lambda (G) x_z$, and then $\lambda (G') > \lambda (G)$,
which is a contradiction.

\ding{78}~
If $|A|x_a < |B|x_b$, then we can delete $|D|-1$ vertices from $D$ with its incident edges,
and add $|D|-1$ new vertices to $C$ and join these new vertices to every vertex of $B\cup \{u\}$.
Similarly, we can show that this process will increase  the spectral radius strictly.
From the above discussion, we can always remove the vertices to force
either $|C|=1$ or $|D|=1$.
Without loss of generality, we may assume  that $|C|=1$ and $C=\{c\}$.

\ding{77}~
If $x_u\ge x_c$, then we remove $|B|-1$ vertices from $B$ with its incident edges,
and add $|B|-1$ new vertices to $D$ and join these vertices to $A\cup \{u\}$.
We keep the weight of these new vertices being $x_b$ and denote the new graph by
$G^*$. Then
\begin{align*}
\lambda (G^*) &\ge
2 \sum_{\{i,j\}\in E(G)} x_ix_j - 2(|B|-1)x_bx_c +2(|B|-1)x_bx_u  \\
& \ge
2 \sum_{\{i,j\}\in E(G)} x_ix_j  =\lambda (G).
\end{align*}
Furthermore, by Rayleigh's formula, we know that the first inequality holds strictly.
Thus we conclude  in the new graph $G^*$ that $B$ is a single vertex, say $B=\{b\}$.
We observe that the graph $G^*$ is a subdivision of a complete bipartite
graph on $(A\cup \{u\}, \{b\} \cup D)$ by subdividing the edge $\{b,u\}$.

\ding{72}~
If $x_u< x_c$, then we delete $|D|-1$ vertices from $D$ with its incident edges,
and add $|D|-1$ new vertices to $B$ and join these new vertices to $A\cup \{c\}$.
Keeping the weight of vertices unchanged, we denote the new graph by $G^{\star}$. Then
we can similarly get $\lambda (G^{\star}) > \lambda (G)$.
In the graph $G^{\star}$, we have $|D|=1$ and write $D=\{d\}$. Thus $G^{\star}$
is a subdivision of a complete bipartite
graph on $(A\cup \{c\}, B\cup \{d\})$ by subdividing the edge $\{c,d\}$.

From our discussion above, we know that if $G$ is an $n$-vertex triangle-free non-bipartite graph and
attains the maximum spectral
radius, then $G$ is a subdivision of a complete bipartite by subdividing exactly one edge. Lemma \ref{lempp}
implies that $G$ is a subdivision of a balanced complete bipartite graph
on $n-1$ vertices.
\end{proof}

\section{Refinement of spectral Tur\'{a}n theorem}

 \label{sec4}

In 1981, Brouwer \cite{Bro1981} proved the following improvement on
Tur\'{a}n's Theorem. 

\begin{theorem}[Brouwer, 1981]  \label{thmBrouwer}
Let  $n\ge 2r+1$ be an integer and
$G$ be an $n$-vertex graph.
If $G$ is $K_{r+1}$-free and $G$ is not $r$-partite,  then
  \[   e(G)\le e(T_r(n)) - \left\lfloor \frac{n}{r} \right\rfloor +1.  \]
\end{theorem}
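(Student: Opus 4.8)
The plan is to prove Brouwer's theorem by induction on $n$, taking the case $r=2$ as the base. This base is nothing but Erd\H{o}s' stability Theorem \ref{thmErd}: since $e(T_2(n)) - \lfloor n/2\rfloor + 1 = \lfloor n^2/4\rfloor - \lfloor n/2\rfloor + 1 = \lfloor (n-1)^2/4\rfloor + 1$, the two bounds coincide exactly. For the inductive step I would first pass to an extremal graph: among all $K_{r+1}$-free, non-$r$-partite graphs on $n\ge 2r+1$ vertices let $G$ have the maximum number of edges, which (as in the proof of Theorem \ref{thm460}) may be taken connected. The numerical engine is the elementary identity $e(T_r(n)) - e(T_r(n-1)) = n - \lceil n/r\rceil$, the degree of a vertex in a largest part of the Tur\'{a}n graph; deleting one vertex and re-inserting it is what lets the target bound propagate.

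The easy regime is when some vertex $w$ with $\deg(w) \le n - \lceil n/r\rceil - (\lfloor n/r\rfloor - \lfloor (n-1)/r\rfloor)$ can be deleted while keeping $G$ non-$r$-partite. Then $G - w$ is $K_{r+1}$-free and non-$r$-partite on $n-1$ vertices, so (for $n-1\ge 2r+1$) the induction hypothesis gives $e(G-w)\le e(T_r(n-1)) - \lfloor (n-1)/r\rfloor + 1$, and $e(G) = e(G-w) + \deg(w)$ then meets the desired bound directly from the identity above. To guarantee that a vertex of sufficiently small degree exists, I would invoke a minimum-degree forcing result of Andr\'{a}sfai--Erd\H{o}s--S\'{o}s type: a $K_{r+1}$-free graph with $\delta(G) > \frac{3r-4}{3r-1}n$ is automatically $r$-partite. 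Since $G$ is non-$r$-partite, this bounds its minimum degree by $\frac{3r-4}{3r-1}n < (1-\tfrac1r)n$, which is the order of magnitude the inductive inequality wants, so the vertex of minimum degree is the natural candidate for $w$.

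The delicate case is when the only available low-degree vertex $v$ has the property that $G-v$ \emph{is} $r$-partite (i.e.\ $v$ is critical for non-$r$-partiteness). Now Tur\'{a}n's Theorem \ref{thmturanstrong} only yields $e(G-v)\le e(T_r(n-1))$, and bounding $\deg(v)$ crudely overshoots the target by about $\lfloor n/r\rfloor$; the missing $\lfloor n/r\rfloor - 1$ edges must be recovered from the interaction of the two hypotheses on $v$. Because $G$ is non-$r$-partite, in every proper $r$-colouring of $G-v$ the vertex $v$ is adjacent to all $r$ colour classes; because $G$ is $K_{r+1}$-free, the neighbourhood $N(v)$ contains no $K_r$. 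In a \emph{complete} $r$-partite graph a copy of $K_r$ is just a transversal of the parts, so a neighbourhood meeting every part would already create a $K_r$. Hence $G-v$ cannot be complete $r$-partite: it must omit enough cross-edges (equivalently, $v$ must miss enough vertices inside the classes it otherwise dominates) that no $K_r$ appears in $N(v)$ yet no colour is freed for $v$. Converting this qualitative obstruction into the precise deficit $\lfloor n/r\rfloor - 1$, and checking that the hypothesis $n\ge 2r+1$ is exactly what makes the parts large enough for the count to be valid, is the step I expect to be the main obstacle.

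Finally, I would note that the spectral Zykov symmetrization developed in Section \ref{sec3} suggests a cleaner conceptual route to this hard case, and one well suited to the paper's toolkit. Just as the proof of Theorem \ref{thmLNW} symmetrizes non-adjacent pairs of a non-bipartite triangle-free graph until it collapses to a subdivision of a balanced complete bipartite graph, here one would symmetrize non-adjacent pairs while \emph{preserving} non-$r$-partiteness (monitoring $\chi(Z_{u,v}(G)) = \chi(G\setminus\{u\})$), driving $G$ toward a near--complete-$r$-partite graph in which a single critical vertex carries the obstruction. From such a normalized graph the edge deficit can be read off directly, which is also the structure one needs for the spectral refinement in Theorem \ref{thm214}.
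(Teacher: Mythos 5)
The paper does not prove Theorem \ref{thmBrouwer} at all --- it quotes Brouwer's result with a citation and uses it only as motivation for the spectral analogue (Theorem \ref{thm214}) --- so your proposal has to stand on its own, and as written it does not. The case you yourself label ``the main obstacle'' (a low-degree vertex $v$ such that $G-v$ \emph{is} $r$-partite) is not a corner case: it is where the entire content of the theorem lives. Your qualitative observations there are correct --- $N(v)$ must meet every colour class of $G-v$ yet span no $K_r$, hence $G-v$ cannot be complete $r$-partite --- but converting them into the bound $e(G)\le e(T_r(n))-\lfloor n/r\rfloor+1$ requires a real counting argument that trades $\deg(v)=\sum_i |N(v)\cap V_i|$ against the number of cross-edges that must be missing from $G-v$ in order to block every transversal clique through $N(v)$, and nothing in your outline supplies it; the induction frame and the identity $e(T_r(n))-e(T_r(n-1))=n-\lceil n/r\rceil$ are routine bookkeeping around this hole. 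Your closing suggestion to run a Zykov symmetrization that preserves non-$r$-partiteness is indeed essentially how the paper handles the spectral version (Claim \ref{claim4.1} reduces to a graph that is one vertex away from $r$-partite, and Lemma \ref{lem42} then does the extremal computation), but it does not rescue the edge version as written: the edge-count symmetrization only gives non-strict inequalities, and after normalization you would still need exactly the deficit computation you deferred, i.e.\ an edge analogue of Lemma \ref{lem42}.

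There is also a quantitative error in the ``easy regime.'' The induction step needs a deletable vertex $w$ with $\deg(w)\le n-\lceil n/r\rceil-\bigl(\lfloor n/r\rfloor-\lfloor (n-1)/r\rfloor\bigr)$, which in all cases equals $n-\lfloor n/r\rfloor-1$; when $r\mid n$ this is $\bigl(1-\tfrac1r\bigr)n-1$. The Andr\'asfai--Erd\H{o}s--S\'os theorem only guarantees $\delta(G)\le \frac{3r-4}{3r-1}n$, and
\[
\Bigl(1-\tfrac1r\Bigr)n-1-\frac{3r-4}{3r-1}\,n \;=\; \frac{n}{r(3r-1)}-1,
\]
which is nonnegative if and only if $n\ge 3r^2-r$. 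So for all multiples of $r$ with $2r+1\le n<3r^2-r$ (e.g.\ $n=3r,4r,\dots$), the AES bound is strictly weaker than what the induction needs, and the easy regime simply does not apply; the theorem, however, claims the bound for every $n\ge 2r+1$. Compounding this, your induction setup is muddled: you induct on $n$ but declare ``$r=2$'' to be the base, so for each fixed $r\ge 3$ the true base $n=2r+1$ --- and in fact the whole initial segment of $n$ where AES fails --- is left untreated. A correct proof must either handle these small values of $n$ directly or avoid minimum-degree reductions altogether.
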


Theorem \ref{thmBrouwer} was also independently studied in
many references, e.g., \cite{AFGS2013,HT1991,KP2005,TU2015}.
Similar with that of Theorem \ref{thmErd}, the bound of Theorem \ref{thmBrouwer} is sharp and there are many extremal graphs attaining this bound.  In particular, the graph $Y_r(n)$
in Definition \ref{def-Yrn}
is one of the extremal graphs of Brouwer's theorem.

We would like to illustrate the reason why we are interested in the study of
the family of non-$r$-partite graphs.
On the one hand, the Erd\H{o}s degree majorization algorithm \cite{Erd1970}
or \cite[pp. 295--296]{Bollobas78}
implies that if $G$ is an $n$-vertex $K_{r+1}$-free graph, then there exists an $r$-partite graph $H$
on the same vertex set $V(G)$ such that $d_G(v) \le d_H(v)$ for every vertex $v$.
Consequently, we get $e(G)\le e(H)\le e(T_r(n))$. Hence it is meaningful to
determine the family of graphs attaining the second largest value of
the extremal function. This problem is usually called the stability problem.
On the other hand,
there are various ways to study the extremal graph problems
under  some reasonable constraints.
For example, the condition of non-$r$-partite graph
is equivalent to saying the chromatic number $\chi (G)\ge r+1$.
Moreover, one can also consider the extremal problem under the
restriction $\alpha (G) \le f(n)$ for a given function $f(n)$,
where $\alpha (G)$ is the independence number of $G$.
This is the well-known Ramsey--Tur\'{a}n problem; see \cite{SS2001}
for a comprehensive survey.

The proof of Theorem \ref{thmLNW} stated in Section \ref{sec3}
can bring us
more effective treatment for the extremal spectral problem
when $K_{r+1}$  is a forbidden subgraph.
As promised in Introduction,
we shall prove Theorem \ref{thm214}, which
 extends Theorem \ref{thmLNW} to non-$r$-partite
$K_{r+1}$-free graphs.
Next, we restate Theorem \ref{thm214} as below for convenience
of readers.

\begin{theorem} \label{thm214-restate}
 Let $G$ be an $n$-vertex $K_{r+1}$-free graph.
If  $G$ is not $r$-partite,
then
\[  \lambda (G) \le \lambda (Y_r(n)).  \]
Moreover, the equality holds if and only if $G=Y_r(n)$.
\end{theorem}

Theorem \ref{thm214-restate} is not only
a spectral version of  Theorem \ref{thmBrouwer}, but also
a refinement of the spectral Tur\'{a}n's theorem \ref{thm460}.
  Our proof is mainly based on the spectral Zykov symmetrization.
Before showing the proof, we need to introduce the following lemma.

\begin{lemma} \label{lem42}
Let $K_{b_1,b_2,\ldots ,b_r}$ be the complete $r$-partite graph
with parts $B_1,B_2,\ldots ,B_r$ satisfying $|B_i|=b_i$ for every $i\in [r]$ and
$\sum_{i=1}^r b_i=n-1$.
Let $G$ be an $n$-vertex graph obtained from $K_{b_1,b_2,\ldots ,b_r}$
by adding a new vertex $u$ and choosing $v\in B_1, w\in B_2$,
and removing the edge $vw$, and adding the edges $uv, uw$ and $ut$ for every $t\in \cup_{i=3}^r B_i$. Then
\[  \lambda (G) \le \lambda (Y_r(n)). \]
Moreover, the equality holds if and only if $G=Y_r(n)$.
\end{lemma}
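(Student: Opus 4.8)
The plan is to first expose a clean join structure in $G$, then run a balancing (vertex-transfer) argument that forces near-equal part sizes, and finish with a small characteristic-polynomial comparison of the few surviving candidates. The first step is to observe that $G$ is a join. Writing $S=B_3\cup\cdots\cup B_r$, every vertex of $S$ is adjacent to all of $B_1\cup B_2\cup\{u\}$ (each vertex of $S$ lies in a part distinct from $B_1,B_2$, and $u$ is joined to all of $B_3,\ldots,B_r$ by construction), while on $B_1\cup B_2\cup\{u\}$ the graph is exactly $K_{b_1,b_2}$ with the edge $vw$ subdivided by $u$. Hence, writing $\vee$ for the join,
\[ G = K_{b_3,b_4,\ldots,b_r}\vee SK_{b_1,b_2}, \]
the join of a complete $(r-2)$-partite graph with a one-edge subdivision of a complete bipartite graph. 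The same bookkeeping identifies $Y_r(n)=K_{t_2,\ldots,t_{r-1}}\vee SK_{t_1,\,t_r-1}$, so the lemma reduces to optimizing the composition $(b_1,\ldots,b_r)$ of $n-1$ under this template and proving that the maximum is attained exactly at the balanced composition realizing $Y_r(n)$. Note in particular that the case $r=2$ is precisely Lemma \ref{lempp}, so that computation is the base case of what follows.

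Second, since the family of such graphs on $n$ vertices is finite, I would take $G$ to attain the maximum spectral radius in the family; $G$ is connected, so it has a positive unit Perron eigenvector $\bm{x}$, and vertices lying in the same class of the natural equitable partition — namely $\{u\},\{v\},\{w\},B_1\setminus\{v\},B_2\setminus\{w\}$, and each $B_i$ with $i\ge 3$ — carry equal entries. I would then balance the full parts $B_3,\ldots,B_r$: the Perron entry of a part decreases as that part grows, so if two of them had sizes differing by at least $2$, shifting one vertex from the larger part to the smaller and testing the new adjacency matrix against $\bm{x}$ via the Rayleigh quotient would strictly raise the spectral radius, contradicting maximality. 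This forces $|b_i-b_j|\le 1$ for all $i,j\ge 3$. A parallel transfer between the two subdivided sides, extending the polynomial computation of Lemma \ref{lempp}, pins down the relation between $b_1$ and $b_2$.

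Third, I would compare the two groups of slots against each other. Because a full part $B_i$ ($i\ge 3$) is completely joined to $u$ whereas a subdivided side meets $u$ in a single vertex, the transfer between a full slot and a subdivided slot is \emph{not} symmetric and must be read off directly from the eigenvalue equations $\lambda x_z=s_G(z,\bm{x})$ for the class representatives. After all these moves only a bounded number of near-balanced candidates survive — every part has size $\lfloor n/r\rfloor$ or $\lceil n/r\rceil$ up to the single unit that was removed to create $u$, and the candidates differ only in which two slots are subdivided and where that unit sits — so the proof concludes by writing the $(r+3)$-dimensional quotient matrix of each candidate and comparing the largest roots of their characteristic polynomials, exactly as in Lemma \ref{lempp} but one dimension larger.

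The hard part will be the cross-group comparison together with the final polynomial computation. Balancing the full parts among themselves is routine, but establishing the precise optimum — that the removed unit is absorbed into a subdivided side, yielding $SK_{t_1,t_r-1}$, rather than into a full part, and that the two subdivided sides are the smallest available slots — requires the asymmetric eigenvector estimate above, after which comparing the characteristic polynomials of the surviving quotient matrices for general $r$ is where the genuine work lies. I expect the nondegeneracy needed for the construction (every part nonempty, as guaranteed by $n\ge 2r+1$ in the application to Brouwer's theorem) to enter here, both to keep the candidate graphs well defined and to furnish a lower bound on $\lambda(Y_r(n))$ driving the sign comparison, in the same way that $\lambda(SK_{s+1,t+3})\ge\sqrt{6}$ was used to close Lemma \ref{lempp}.
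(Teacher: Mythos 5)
Your proposal follows essentially the same route as the paper's proof: fix an extremal $G$ in the family, exploit the Perron eigenvector and the equal entries on the classes $\{u\},\{v\},\{w\},B_1\setminus\{v\},B_2\setminus\{w\},B_3,\ldots,B_r$, balance the complete parts $B_i$ ($i\ge 3$) by a Rayleigh-quotient vertex transfer, and settle the remaining comparisons (between $b_1$ and $b_2$, and across the two groups) by comparing largest roots of the characteristic polynomials of the $(r+3)$-dimensional quotient matrices, with the $F_{b_1,b_2}$ computation of Lemma \ref{lempp} as base case and induction on $r$. The join decomposition $G=K_{b_3,\ldots,b_r}\vee SK_{b_1,b_2}$ is a nice organizing observation not made explicit in the paper, but the substance of the argument — including the cross-group cases you correctly flag as the hard part — coincides with the paper's Cases 1--4.
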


We illustrate the construction of $Y_r(n)$ in another way.
Let $T_r(n-1)$ be the $r$-partite Tur\'{a}n graph on $n-1$ vertices whose
parts $S_1$, $S_2, \ldots $, $S_r$ have sizes $s_1,s_2,\ldots ,s_r$ such that
$\lfloor \frac{n-1}{r}\rfloor = s_1\le s_2 \le \cdots \le s_r = \lceil \frac{n-1}{r}\rceil$.
Note that the extremal graph $Y_r(n)$ could be obtained from $T_r(n-1)$
by adding a new vertex $u$, and choosing two vertices $v\in S_1$ and $w\in S_2$,
then deleting the edge $vw$, and adding the edges $uv,uw$ and $ut$ for every
vertex $t \in \cup_{i=3}^r S_i$.
 Lemma \ref{lem42} states that $G$ attains the maximum spectral radius
only when its part sizes $b_1,b_2,\ldots ,b_r$ are as equal as possible,
and the two special vertices $v,w$ are located in the smallest two parts, respectively.
Since $\lambda (G)$ is the largest root of the
characteristic polynomial $P_G(x)=\det (xI_n- A(G))$,
it is operable to compute  $\lambda (G)$ exactly for some small integers $r$
by using computers, while it seems complicated for large $r$.

\begin{proof}[{\bf Proof of Lemma \ref{lem42}}]
Let $G$ be a graph satisfying the requirement of Lemma \ref{lem42}
and $G$ has the maximum spectral radius. We will show that $G=Y_r(n)$.
Since $G$ is connected, there exists a
positive unit eigenvector $\bm{x}\in \mathbb{R}^n$ corresponding to $\lambda (G)$.
Then $A(G) \bm{x}=\lambda (G) \bm{x}$ and
\[  \lambda (G)=
\bm{x}^TA(G)\bm{x}=2\sum_{\{i,j\}\in E(G)} x_ix_j. \]
Moreover, the eigen-equation gives that
$ \lambda (G) x_v= \sum_{u\in N(v)} x_u$ for every $v\in V(G)$.
It follows that  if  two non-adjacent vertices
have the same neighborhood, then they
have the same value on the corresponding coordinates of $\bm{x}$.
Thus all coordinates of $\bm{x}$ corresponding to the vertices of  $B_i$ are equal, and then we write $x_i$
for the value of those coordinates for each $i\in \{3,\ldots ,r\}$.
We denote $B_1^-=B_1 \setminus \{v\}$
and $B_2^-=B_2 \setminus \{w\}$. Similarly, all coordinates of $\bm{x}$
corresponding to the vertices of $B_i^-$ are equal for $i\in \{1,2\}$.

Assume on the contrary that $G$ is not isomorphic to $Y_r(n)$.
In other words,  there are two parts $B_i$ and $B_j$ such that $|b_i-b_j| \ge 2$, or $b_i \le b_j-1$ for some $i\in \{3,4,\ldots ,n\}$
and $j\in \{1,2\}$.
By the symmetry, there are  four cases listed below.
\begin{itemize}
\item[(A)] $b_i \le b_j-2$ for some $i,j\in \{3,\ldots ,r\}$;

\item[(B)] $b_1 \le b_2 -2$;

\item[(C)] $b_1\le b_i-2$ for some $i\in \{3,\ldots ,r\}$;

\item[(D)] $b_i\le b_1-1 $ for some $i\in \{3,\ldots ,r\}$.
\end{itemize}

{\bf Case A.}
First and foremost, we shall consider  case  that $b_i \le b_j -2$ for some
$i, j \in \{3,\ldots , r\}$. The treatment for this case has its root in \cite{KNY2015}.
If $b_i+b_j=2b$ for some integer $b$,
then we will balance the number of vertices of parts $B_i$ and $B_j$.
Namely, we define a new graph $G'$ obtained from $G$ by
deleting all edges between $B_i$ and $B_j$, and then we move some vertices
from $B_j$ to $B_i$ such that the resulting sets, say $B_i',B_j'$,
have size $b$, and then we add all edges between $B_i'$ and $B_j'$.
In this process, we keep the other edges unchanged.
We define a new vector $\bm{y}\in \mathbb{R}^n$ by setting
$y_s=\sqrt{(b_ix_i^2 +b_jx_j^2)/(2b)}$ for each vertex $s \in B_i'\cup B_j'$,
and $y_t=x_t$ for each $t \in V(G')\setminus (B_i' \cup B_j')$.
Then $\sum_{v\in V(G')} y_v^2=1$ and
\begin{align*}
\bm{y}^TA(G')\bm{y} - \bm{x}^TA(G)\bm{x}
 = 2((b y_s)^2- b_ix_ib_jx_j)
+ 2(2b y_s- (b_ix_i + b_j x_j))\sum_{t\notin B_i'\cup B_j'} x_t.
 \end{align*}
 Note that $b=\frac{b_i+b_j}{2}> \sqrt{b_ib_j}$ and
 \[ (b y_s)^2=b^2\cdot  \frac{b_ix_i^2 +b_jx_j^2}{2b}
 \ge b \sqrt{b_ix_i^2b_jx_j^2}> b_ix_ib_jx_j. \]
 Moreover, the weighted power-mean inequality gives
 \[ 2by_s =2b \left(\frac{b_ix_i^2 +b_jx_j^2}{b_i+b_j} \right)^{1/2}
 \ge 2b \frac{b_ix_i +b_jx_j}{b_i+b_j}=b_ix_i +b_jx_j. \]
 Thus we get $\bm{y}^TA(G')\bm{y} > \bm{x}^TA(G)\bm{x}$.
Rayleigh's formula gives
\[  \lambda (G') \ge \bm{y}^TA(G')\bm{y} >
\bm{x}^TA(G)\bm{x} =  \lambda (G), \]
 which contradicts with the choice of $G$.

If $b_i+b_j=2b+1$  for some integer $b$, then we move similarly
some vertices from $B_j$ to $B_i$ such that the resulting sets $B_i',B_j'$
satisfying $|B_i'|=b$ and $|B_j'|=b+1$. We construct a vector
$\bm{y}\in \mathbb{R}^n$ by setting
$y_s=\sqrt{(b_ix_i^2 +b_jx_j^2)/(2b+1)}$ for every vertex $s \in B_i'\cup B_j'$,
and $y_t=x_t$ for every $t \in V(G')\setminus (B_i' \cup B_j')$. Similarly, we get
\begin{equation*}
\begin{aligned}
\bm{y}^TA(G')\bm{y} - \bm{x}^TA(G)\bm{x}
 &= 2( b(b+1) y_s^2- b_ix_ib_jx_j)  \\
& \quad + 2( (2b+1) y_s- (b_ix_i + b_j x_j))\sum_{t\notin B_i'\cup B_j'} x_t.
 \end{aligned}
 \end{equation*}
We are going to show that
\[ b(b+1) y_s^2- b_ix_ib_jx_j> 0, \quad \text{and} \quad
(2b+1) y_s- (b_ix_i + b_j x_j) \ge 0. \]
For the first inequality, by applying AM-GM inequality, we get
\[ b(b+1)y_s^2 =b(b+1)\frac{b_ix_i^2+b_jx_j^2}{b_i+b_j}
\ge \frac{2b(b+1)}{b_i+b_j} \sqrt{b_ib_j} x_ix_j. \]
 It is sufficient to prove that
$2b(b+1) > (b_i+b_j)\sqrt{b_ib_j}$. Note that $b_i\le b_j-2$ and $b_i+b_j =2b+1$
is odd. Then $b_i \le b-1$ and $ b_j\ge b+2$. Thus, the first desired inequality holds immediately.
For the second one, the weighted power-mean inequality yields
 \[ (2b+1)y_s =(2b+1) \left(\frac{b_ix_i^2 +b_jx_j^2}{b_i+b_j} \right)^{1/2}
 \ge (2b+1) \frac{b_ix_i +b_jx_j}{b_i+b_j}=b_ix_i +b_jx_j. \]
This case also contradicts with the choice of $G$.

For the remaining three cases, we will show our proof
by considering the characteristic polynomial of the graph $G$
and then applying induction on integer $r$.

{\bf Case B.}
Now, we  consider the case $b_1\le b_2-2$.
Recall that $B_1^-=B_1\setminus \{v\}$
and  $B_2^-=B_2\setminus \{w\}$.
We define a graph $G'$ obtained from $G$  by deleting a
vertex of $B_2^-$, and adding a copy of a vertex of $B_1^-$.
This makes the two parts $B_1^-,B_2^-$ more balanced.
Our goal is to prove that $\lambda (G) < \lambda (G')$,
which contradicts with the maximality of $G$.
Let $x_v, x_w$ and $x_u$ be the weights of vertices
$v,w$ and $u$ respectively.
We denote by $x_1^-$ and $x_2^-$ the weights of vertices of
$B_1^-$ and $B_2^-$ respectively.
The eigen-equation $A(G)\bm{x}=\lambda (G) \bm{x}$ gives
$\sum_{j\in N(i)} x_j = \lambda (G) x_i$
for every $i\in [n]$. Then
\begin{equation*}
\begin{cases}
 \phantom{x_v+x_w+}  x_u
\phantom{+(b_1-1)x_1^-} + (b_2-1)x_2^- + b_3x_3 + \cdots + b_rx_r
=\lambda (G) x_v, \\
\phantom{x_v+x_w+}
x_u + (b_1-1)x_1^- \phantom{+(b_2-1)x_2^-}
+ b_3x_3 + \cdots +b_r x_r = \lambda (G) x_w, \\
 x_v +x_w
 \phantom{+x_u+(b_1-1)x_1^-+(b_2-1)x_2^-}
 \!\! + b_3x_3 + \cdots +b_rx_r = \lambda (G) x_u, \\
 \phantom{x_v+}\, \,x_w \phantom{+x_u +(b_1-1)x_1^-} + (b_2-1)x_2^- \! + b_3x_3 + \cdots +b_rx_r = \lambda (G) x_1^-, \\
 x_v \phantom{+x_w+x_u} \, + (b_1-1)x_1^-
\phantom{+(b_2-1)x_2^-} + b_3x_3 + \cdots +b_rx_r
=\lambda (G) x_2^- , \\
x_v+x_w+x_u + (b_1\!-\!1)x_1^-
+ (b_2\!-\!1)x_2^- + b_4x_4 + \cdots +b_rx_r = \lambda (G) x_3, \\
 \quad \,\,\,  \vdots \\
x_v+x_w+x_u + (b_1\!-\!1)x_1^- + (b_2\!-\!1)x_2^- + b_3x_3 + \cdots +b_{r-1}x_{r-1} = \lambda (G) x_r.
\end{cases}
\end{equation*}
Thus $\lambda (G)$ is the largest eigenvalue of
the following matrix $A_r$ corresponding to
eigenvector $(x_v,x_w,x_u$, $x_1^-,x_2^-, x_3 ,\ldots ,x_r)$,
where $A_r (r\ge 3)$ is defined as
\[  A_r:=
\left[ \begin{array}{ccccc;{2pt/2pt}ccc}
0 & 0 & 1 & 0 & b_2-1 & b_3 & \cdots & b_r \\
0&0&1&b_1-1 &0 & b_3 & \cdots & b_r \\
1&1&0&0&0&b_3 & \cdots & b_r \\
0&1&0&0 & b_2-1 & b_3 & \cdots & b_r \\
1 & 0&0& b_1-1 & 0 & b_3 & \cdots & b_r  \\
\hdashline[2pt/2pt]
1&1&1&b_1-1 & b_2-1 & 0 & \cdots & b_r \\
\vdots & \vdots & \vdots & \vdots & \vdots & \vdots & & \vdots \\
1&1&1&b_1-1 & b_2-1 & b_3 & \cdots & 0
\end{array} \right].  \]
For notational convenience, we denote
\[  A_2:=
\begin{bmatrix}
0 & 0 & 1 & 0 & b_2-1  \\
0&0&1&b_1-1 &0 \\
1&1&0&0&0  \\
0&1&0&0 & b_2-1  \\
1 & 0&0& b_1-1 & 0
\end{bmatrix}, \]
and
\[ R_{b_1,b_2}(x):=\det \begin{bmatrix}
x+1&1 & 0 & b_1-1 &0 \\
1&x+1&0&0&b_2-1\\
0&0&x+1 &b_1-1&b_2-1 \\
1&0&1&x+b_1-1&0\\
0&1&1&0&x+b_2-1
\end{bmatrix}. \]
For every $r\ge 2$, the characteristic polynomial of $A_r$ is denoted by
\[ F_{b_1,b_2,\ldots ,b_r}(x) =  \det (xI_{r+3} - A_r).\]
In particular, the polynomial $F_{b_1,b_2}(x)$
is the same as that in Lemma \ref{lempp}.
By expanding the last column of $\det (xI_{r+3}-A_r)$,
 we get the following recurrence relations:
\begin{equation} \label{eq5}
F_{b_1,b_2,b_3}(x) = (x+b_3) F_{b_1,b_2}(x) - b_3 R_{b_1,b_2}(x),
\end{equation}
and for every integer $r\ge 4$,
\begin{equation} \label{eq6}
F_{b_1,b_2,\ldots ,b_r}(x) =
(x+b_r) F_{b_1,b_2,\ldots ,b_{r-1}}(x)
- b_r\prod_{i=3}^{r-1} (x+b_i) R_{b_1,b_2}(x),
\end{equation}
where $F_{b_1,b_2}(x) $ and $R_{b_1,b_2}(x)$
are computed as below:
\begin{equation*}
\begin{aligned}
 F_{b_1,b_2}(x)
&= x^5 - (b_1b_2+1) x^3+ (3b_1b_2 -2b_1 -2b_2 +1)x
-2b_1b_2 +2b_1 +2b_2 -2,  \\
R_{b_1,b_2}(x)
&= x^5+(b_1+b_2+1)x^4 + (b_1b_2+1)x^3-(b_1b_2+b_1+b_2-3)x^2 \\
& \quad + (2b_1 + 2b_2 -3b_1b_2-1)x + 3(b_1-1)(b_2-1).
\end{aligned}
\end{equation*}
Note that $b_1\le b_2-2$. Upon computations, we obtain
\begin{equation*}
F_{b_1+1,b_2-1}(x) - F_{b_1,b_2}(x) = (b_1 -b_2+1)(x-1)^2(x+2) < 0,
\end{equation*}
and
\begin{equation*}
R_{b_1+1,b_2-1}(x) - R_{b_1,b_2}(x) = - (b_1-b_2+1)(x-1)(x^2-3) > 0.
\end{equation*}
Note that $b_1-b_2+1\le -1$.
Combining with equation (\ref{eq5}), we obtain
\begin{align*}
&F_{b_1+1,b_2-1,b_3}(x) - F_{b_1,b_2,b_3}(x)  \\
&= (x+b_3)(F_{b_1+1,b_2-1}(x)- F_{b_1,b_2}(x)) - b_3 ( R_{b_1+1,b_2-1}(x)- R_{b_1,b_2}(x) ) \\
&= (b_1-b_2+1)(x-1)^2(x+2)(x+b_3) + b_3 (b_1-b_2+1)(x-1)(x^2-3) \\
&= (b_1-b_2+1)(x-1)
\bigl( (x-1)(x+2)(x+b_3) +b_3 (x^2-3) \bigr) < 0.
\end{align*}
Next we prove by induction that for every $r\ge 3$ and $x\ge 2$,
\begin{equation} \label{eqcase2}
\begin{aligned}
F_{b_1+1,b_2-1,b_3,\ldots ,b_r}(x)
- F_{b_1,b_2,b_3,\ldots ,b_r}(x)  <0.
\end{aligned}
\end{equation}
Firstly, the base case $r=3$ was verified in the above.
For  $r\ge 4$, we get from (\ref{eq6}) that
\begin{align*}
&F_{b_1+1,b_2-1,b_3,\ldots ,b_r}(x)
- F_{b_1,b_2,b_3,\ldots ,b_r}(x) \\
&=(x+b_r) \bigl( F_{b_1+1,b_2-1,b_3,\ldots ,b_{r-1}}(x)- F_{b_1,b_2,b_3,\ldots ,b_{r-1}}(x) \bigr) \\
&\quad -b_r\prod_{i=3}^{r-1} (x+b_i) \bigl( R_{b_1+1,b_2-1}(x) - R_{b_1,b_2}(x) \bigr) <0,
\end{align*}
where the last inequality holds by applying inductive hypothesis
on the case $r-1$ and invoking
the fact $R_{b_1+1,b_2-1}(x) - R_{b_1,b_2}(x) >0$.
From inequality (\ref{eqcase2}), we know that
\[ F_{b_1+1,b_2-1,b_3,\ldots ,b_r}( \lambda (G))
< F_{b_1,b_2,b_3,\ldots ,b_r}(\lambda (G))=0.\]
Since $\lambda (G')$ is the largest root of
 $F_{b_1+1,b_2-1,b_3,\ldots ,b_r}(x)$,
this implies $\lambda (G) < \lambda (G')$.

{\bf Case C.}
Thirdly, we consider the case $b_1\le b_i-2$ for some
$i\in \{3,\ldots ,r\}$. We may assume by symmetry that $b_1\le b_3-2$.
Our treatment in this case is similar with that of Case (B).
 Let $G^*$ be the graph  obtained from $G$
  by deleting a vertex of $B_3$ with its incident edges,
  and add a new vertex to $B_1^-$ and connect this new vertex to
  all remaining vertices of $B_3$ and all vertices of $B_2\cup B_4 \cup \cdots \cup B_r$.
  We will prove that $\lambda (G) < \lambda (G^*)$.
 By Case (B), we may assume that $|b_1-b_2|\le 1$.
  Clearly, $\lambda (G^*)$
 is the largest root of $F_{b_1+1,b_2,b_3-1,b_4,\ldots ,b_r}(x)$.
First of all, we will show that
\begin{equation} \label{eqcase3}
F_{b_1+1,b_2,b_3-1}(x) - F_{b_1,b_2,b_3}(x) <0,
\end{equation}
and then by applying induction, we will prove that for each $r\ge 4$,
\begin{equation} \label{eqcase3second}
 F_{b_1+1,b_2,b_3-1,b_4,\ldots ,b_r}(x)
- F_{b_1,b_2,b_3,b_4,\ldots ,b_r}(x) <0.
\end{equation}
Next, we verify  inequalities (\ref{eqcase3}) and  (\ref{eqcase3second}) for the case $r=4$ only,
since the inductive steps are the same as that of Case (B)
with slight differences.
By computation, we obtain
\begin{align*}
&(x+b_3-1)F_{b_1+1,b_2}(x)- (x+b_3)F_{b_1,b_2}(x)\\
&= -x^5 -b_2x^4 +(b_2(b_1-b_3+1)+1)x^3+ (3b_2-2)x^2 \\
&\quad + (3b_2b_3-3b_1b_2+2b_1-3b_2-2b_3+3)x
+ 2b_1b_2 - 2b_1 - 2b_2b_3+2b_3,
\end{align*}
and
\begin{align*}
&-(b_3-1)R_{b_1+1,b_2}(x) +b_3R_{b_1,b_2}(x) \\
&=x^5 + (b_2+b_1-b_3+2)x^4 +
(b_2(b_1-b_3+1)+1)x^3 \\
& \quad +(-b_1b_2-b_1+b_2b_3-2b_2+b_3+ 2)x^2 \\
& \quad +(3b_2b_3-3b_1b_2+2b_1-b_2-2b_3+1)x  \\
& \quad + 3b_1b_2-3b_1-3b_2b_3+ 3b_3.
\end{align*}
Combining these two equations with (\ref{eq5}), we get
\begin{align*}
&F_{b_1+1,b_2,b_3-1}(x) - F_{b_1,b_2,b_3}(x) \\
&=(x+b_3-1)F_{b_1+1,b_2}(x)-(x+b_3)F_{b_1,b_2}(x) - (b_3-1)R_{b_1+1,b_2}(x) + b_3 R_{b_1,b_2}(x) \\
&=(b_1-b_3+2)x^4 + 2(b_2(b_1-b_3+1)+1)x^3 + (b_2b_3-b_1b_2-b_1+b_2+b_3)x^2 \\
&\quad +(6b_2b_3-6b_1b_2 + 4b_1 - 4b_2 - 4b_3 + 4)x + 5b_1b_2-5b_1-5b_2b_3+5b_3 .
\end{align*}
Combining $|b_1-b_2|\le 1$ and $b_1- b_3\le -2$,
 one can verify that $F_{b_1+1,b_2,b_3-1}(x) < F_{b_1,b_2,b_3}(x)$
for every $x\ge 2(b_1-2)$.
This completes the proof of (\ref{eqcase3}).
We now consider  (\ref{eqcase3second}) in the case $r=4$. Note that
$b_1- b_3+2\le 0$ and
\begin{align*}
&-(x+b_3-1)R_{b_1+1,b_2}(x) + (x+b_3)R_{b_1,b_2}(x) \\
&=(b_1-b_3+2)x^4 + (b_2(b_1 -b_3+ 2)+2)x^3 + (b_2(b_3-b_1+1)+b_3-b_1)x^2 \\
& \quad  +(3b_2b_3 -3b_1b_2+ 2b_1- 4b_2- 2b_3+4)x + 3b_1b_2- 3b_1- 3b_2b_3+ 3b_3 <0,
\end{align*}
which together with  (\ref{eq6}) and the case $r=3$ yields
\begin{align*}
&F_{b_1+1,b_2,b_3-1,b_4}(x) - F_{b_1,b_2,b_3,b_4}(x) \\
& = (x+b_4) (F_{b_1+1,b_2,b_3-1}(x)- F_{b_1,b_2,b_3}(x) )  \\
 & \quad - b_4(x+b_3-1)R_{b_1+1,b_2}(x) + b_4(x+b_3)R_{b_1,b_2}(x)<0.
\end{align*}
Let $t=\min\{b_i: 1\le i\le r\} -1$.
Since the complete $r$-partite $K_{t,t,\ldots ,t}$
is a subgraph of $G$, we know that  $\lambda (G) \ge
\lambda (K_{t,t,\ldots ,t})=(r-1)t$.
Thus, we can  get $F_{b_1+1,b_2,b_3-1,b_4, \ldots ,b_r}
(\lambda (G))
< F_{b_1,b_2,b_3,b_4,\ldots ,b_r}(\lambda (G))=0$,
which yields $\lambda (G) < \lambda (G^*)$, which is a
contradiction.

{\bf Case D.}
Finally, we consider the case  $b_i \le b_1 -1$ for some $i\ge 3$.
We may assume that $b_3\le b_1-1$.
This case could  be completed by applying a similar argument of Case (C). Let $G^*$ be the graph obtained from $G$
by removing a vertex of $B_1^-$ with its incident edges, and
adding a copy of a vertex of $B_3$.
In what follows, we will show that
\begin{equation}  \label{eqcase4}
 F_{b_1-1,b_2,b_3+1}(x) - F_{b_1,b_2,b_3}(x)<0,
 \end{equation}
and then we prove by induction that for every $r\ge 4$,
\begin{equation}  \label{eqcase4second}
 F_{b_1-1,b_2,b_3+1,b_4,\ldots ,b_r}(x)
 - F_{b_1,b_2,b_3,b_4,\ldots ,b_r}(x)<0.
 \end{equation}
By computation, we obtain that
\begin{align*}
&(x+b_3+1)F_{b_1-1,b_2}(x) - (x+b_3)F_{b_1,b_2}(x) \\
&= x^5 + b_2x^4 + ( b_2(b_3-b_1+1)-1)x^3 + (-3b_2+2)x^2 \\
& \quad + (3b_1b_2- 2b_1- 3b_2b_3- 3b_2+ 2b_3+1)x - 2b_1b_2+ 2b_1+ 2b_2b_3+ 4b_2- 2b_3-4,
\end{align*}
and
\begin{align*}
&- (b_3+1) R_{b_1-1,b_2}(x) + b_3 R_{b_1,b_2}(x) \\
&= -x^5 + (b_3- b_1- b_2)x^4 + (b_2(b_3-b_1+1)-1)x^3 \\
&\quad + (b_1b_2-b_2b_3+b_1-b_3-4)x^2 \\
&\quad +(3b_1b_2- 2b_1-3b_2b_3- 5b_2+ 2b_3+3)x \\
& \quad  +3b_2b_3- 3b_1b_2+ 3b_1+6b_2-3b_3-6.
\end{align*}
Combining with the recurrence equation (\ref{eq5}), we get
\begin{align*}
&F_{b_1-1,b_2,b_3+1}(x) - F_{b_1,b_2,b_3}(x) \\
&= (x+b_3+1)F_{b_1-1,b_2}(x) - (x+b_3)F_{b_1,b_2}(x)  - (b_3+1) R_{b_1-1,b_2}(x) + b_3 R_{b_1,b_2}(x) \\
&= (b_3-b_1)x^4 + ( 2b_2(b_3-b_1+1)-2)x^3 + (b_1b_2-b_2b_3+b_1-b_3-3b_2-2)x^2 \\
&\quad + (6b_1b_2-6b_2b_3- 4b_1- 8b_2+ 4b_3+4)x - 5b_1b_2+ 5b_1+5b_2b_3+10b_2-5b_3-10.
\end{align*}
Since $b_3-b_1\le -1$ and $|b_1-b_2|\le 1$,
one can verify that $F_{b_1-1,b_2,b_3+1}(x) - F_{b_1,b_2,b_3}(x)<0$
for every $x\ge 2(b_3-1)$.  This completes the proof of (\ref{eqcase4}).
Next we will prove (\ref{eqcase4second}) only for the case $r=4$,
since the inductive steps are similar with that of Cases (B) and (C).
By computation, we have
\begin{align*}
&-(x+b_3+1)R_{b_1-1,b_2}(x) + (x+b_3)R_{b_1,b_2}(x) \\
&=(b_3-b_1)x^4 + (b_2(b_3-b_1)-2)x^3 + (b_1b_2+b_1- b_2b_3- 3b_2- b_3- 2)x^2 \\
&\quad + (3b_1b_2- 2b_1- 3b_2b_3- 2b_2+ 2b_3)x  - 3b_1b_2 + 3b_1+ 3b_2b_3+ 6b_2- 3b_3-6<0,
\end{align*}
which together with  (\ref{eq6}) and the case $r=3$ gives
 \begin{align*}
 &F_{b_1-1,b_2,b_3+1,b_4}(x) - F_{b_1,b_2,b_3,b_4}(x) \\
&= (x+b_4)(F_{b_1-1,b_2,b_3+1}(x) - F_{b_1,b_2,b_3}(x) ) \\
&\quad -b_4(x+b_3+1)R_{b_1-1,b_2}(x) + b_4(x+b_3)R_{b_1,b_2}(x)<0.
 \end{align*}
Since $F_{b_1-1,b_2,b_3+1,b_4, \ldots ,b_r}
(\lambda (G))
< F_{b_1,b_2,b_3,b_4,\ldots ,b_r}(\lambda (G))=0$
and $\lambda (G^*)$ is the largest root of
 $F_{b_1-1,b_2,b_3+1,b_4,\ldots ,b_r}(x)$,
we know that $\lambda (G) < \lambda (G^*)$, which
contradicts with the choice of $G$.
In summary, we complete the proof of all possible cases.
\end{proof}

\noindent
{\bf Remark.}
It seems possible to prove the last three cases by
using a weight-balanced argument similar with that of the first case.
Nevertheless,
it is inevitable that a great deal of tedious calculations are required
in the proof of these cases.
Moreover, applying the recursive technique of determinants in the proof of Lemma \ref{lem42}, one can compute the characteristic
polynomial of the adjacency matrix and signless Laplacian matrix
of the $n$-vertex complete $r$-partite graph $K_{t_1,\ldots ,t_r}$. More precisely,
\begin{align*}
\det (xI_n-A(K_{t_1,\ldots ,t_r}))
 = x^{n-r} \left( 1-\sum_{i=1}^r \frac{t_i}{x+t_i}\right) \prod_{i=1}^r(x +t_i),
 \end{align*}
and
\begin{align*}
 \det (xI_n-Q(K_{t_1,\ldots ,t_r})) =  \prod_{i=1}^r(x-n+t_i)^{t_i-1} (x-n+2t_i)
\left( 1-\sum_{i=1}^r \frac{t_i}{x-n+2t_i}\right).
\end{align*}
It has its own interests to compute the eigenvalues of
complete multipartite graphs;
see, e.g.,  \cite{Del2012,YWS2011,Obo2019,YYSW2019} for different proofs
and related results.

\medskip
Now, we are ready to give the proof of Theorem \ref{thm214-restate}.

\begin{proof}[{\bf Proof of Theorem \ref{thm214-restate}}]
Assume that $G$ is a $K_{r+1}$-free  non-$r$-partite graph on $n$ vertices
with maximum value of the spectral radius.
Our goal is to prove that $G=Y_r(n)$.
Clearly,  $G$ must be a connected graph.
Let $\bm{x} \in \mathbb{R}^n$ be a positive unit eigenvector of $\lambda (G)$.

\begin{claim} \label{claim4.1}
There exists a vertex $u\in V(G)$ such that
$G\setminus \{u\}$ is $r$-partite.
\end{claim}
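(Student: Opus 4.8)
The plan is to use the spectral Zykov symmetrization developed in Section~\ref{sec3}, exactly as in the triangle-free case, but now run until the graph becomes $r$-partite instead of bipartite. Since $G$ is $K_{r+1}$-free, every application of $Z_{u,v}$ preserves $K_{r+1}$-freeness and does not decrease $\lambda$; moreover, as established before the proof of Theorem~\ref{thmLNW}, whenever two non-adjacent vertices $u,v$ satisfy $s_G(u,\bm{x}) < s_G(v,\bm{x})$, or satisfy $s_G(u,\bm{x}) = s_G(v,\bm{x})$ with $N_G(u)\neq N_G(v)$, the operation increases $\lambda$ \emph{strictly} (using the positivity of $\bm{x}$). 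Because $G$ already attains the maximum spectral radius among $K_{r+1}$-free non-$r$-partite graphs, no such strictly-increasing operation can be available \emph{unless} it destroys the non-$r$-partite hypothesis.

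First I would argue that $G$ cannot itself be $r$-partite (it is not, by hypothesis) and that the symmetrization process, if allowed to run to completion among non-adjacent pairs, terminates in a complete multipartite graph with at most $r$ parts, as recorded in the discussion following the proof of Theorem~\ref{thm460}. Since such a terminal graph is $r$-partite, the process must at some step leave the class of non-$r$-partite graphs. Thus there is a single symmetrization step $Z_{u,v}$ applied to our extremal $G$ — for a non-adjacent pair witnessing the termination condition — such that $G$ is non-$r$-partite but $Z_{u,v}(G)$ is $r$-partite. The key point is that $Z_{u,v}(G)$ differs from $G$ only at the vertex $u$: it is obtained by deleting all edges at $u$ and re-attaching $u$ as a twin of $v$. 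Consequently $Z_{u,v}(G)\setminus\{u\}$ and $G\setminus\{u\}$ are the \emph{same} graph, so the $r$-partiteness of $Z_{u,v}(G)$ forces $G\setminus\{u\}$ to be $r$-partite. This vertex $u$ is the one claimed.

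I would take care to justify that such a step exists without decreasing $\lambda(G)$. Because $G$ is extremal, any valid strictly-increasing symmetrization among non-adjacent pairs of $G$ would yield a graph with larger spectral radius; if that resulting graph were still non-$r$-partite, this would contradict maximality of $G$. Hence every strictly-increasing move available at $G$ must produce an $r$-partite graph. If $G$ admits no non-adjacent pair $(u,v)$ with $s_G(u,\bm{x})\neq s_G(v,\bm{x})$ or with equal sums but distinct neighborhoods, then by the characterization of the terminal configuration $G$ would already be complete multipartite with at most $r$ parts, hence $r$-partite, contradicting the hypothesis. Therefore at least one symmetrization move is available at $G$, and any such move lands in the $r$-partite class, giving the vertex $u$.

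The main obstacle is the logical bookkeeping at the boundary: I must ensure that the existence of a \emph{valid} (non-decreasing, $K_{r+1}$-free) symmetrization move is guaranteed at the extremal $G$, and simultaneously that this move necessarily produces an $r$-partite graph rather than merely a graph with fewer parts that is still non-$r$-partite. The cleanest way around this is to phrase it via the terminal structure: run the symmetrization and observe that the class of non-$r$-partite $K_{r+1}$-free graphs is not closed under the completed process, so the very first step that exits this class is applied to some graph $G'$ with $\lambda(G')\ge\lambda(G)=\lambda(G')$ (by maximality forcing equality along the way), and this $G'$ can be taken to be $G$ itself. Then $Z_{u,v}(G)$ being $r$-partite while $G$ is not yields, via $G\setminus\{u\} = Z_{u,v}(G)\setminus\{u\}$, the desired single vertex $u$ whose deletion makes $G$ $r$-partite.
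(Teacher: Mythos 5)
Your proposal is correct and follows essentially the same route as the paper: apply the spectral Zykov symmetrization $Z_{u,v}$, note that by extremality any available (strictly $\lambda$-increasing, $K_{r+1}$-free-preserving) move must land in the $r$-partite class while the absence of any move would force $G$ to be complete multipartite with at most $r$ parts, and then use that $Z_{u,v}(G)\setminus\{u\}=G\setminus\{u\}$ to extract the vertex $u$. If anything, your bookkeeping at the boundary step is more explicit than the paper's ``without loss of generality'' phrasing, but the underlying argument is identical.
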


\begin{proof}[Proof of Claim \ref{claim4.1}]
Recall in Section \ref{sec3} that for two non-adjacent vertices $u,v\in V(G)$,
 the spectral Zykov symmetrization $Z_{u,v}(G)$
is defined as the graph obtained from $G$ by removing all edges incident to  $u$ and then adding new edges from $u$ to $N_G(v)$.
We can verify that the spectral Zykov symmetrization does not
increase the clique number and the chromatic number.
 Recall that $s_G(v,\bm{x}) =\sum_{i\in N_G(v)} x_i$ is
 the sum of weights of
all neighbors of $v$ in $G$.
For two non-adjacent vertices $u,v$, if $s_G(u,\bm{x}) < s_G(v,\bm{x})$,
then we replace $G$ with $Z_{u,v}(G)$.
If $s_G(u,\bm{x}) = s_G(v,\bm{x})$, then we can apply either $Z_{u,v}$ or $Z_{v,u}$,
which leads to $N(u)=N(v)$ after making the spectral Zykov symmetrization.
Obviously,
the spectral Zykov symmetrization does not create a copy of $K_{r+1}$.
More significantly, it will increase  the spectral radius strictly,
since $\bm{x}$ is entry-wise positive.

The  proof of Claim \ref{claim4.1} is based on the spectral Zykov symmetrization stated in above.
Since $G$ is $K_{r+1}$-free, we can  repeatedly apply
the spectral Zykov symmetrization on every pair of non-adjacent vertices
until $G$ becomes an $r$-partite graph.
Without loss of generality, we may assume that
$G$ is $K_{r+1}$-free and $G$ is not $r$-partite,
while $Z_{u,v}(G)$ is $r$-partite.
Thus $G \setminus \{u\}$ is $r$-partite,
and we assume that  $V(G) \setminus \{u\} :=V_1\cup V_2 \cup \cdots \cup V_r$ is a $r$-partition,
where $V_1,V_2,\ldots ,V_r$ are pairwise disjoint and $\sum_{i=1}^r |V_i| =n-1$.
\end{proof}

We denote $A_i=N(u)\cap V_i$ for every $i\in [r]:=\{1,\ldots ,r\}$.
Note that $G$ has maximum spectral radius among all $K_{r+1}$-free non-$r$-partite graphs. Then for each $i\in [r]$,
every vertex of $ V_i \setminus A_i$ is adjacent to every vertex of $V_j$
for every $j\in [r]$ and $j\neq i$.
We remark here that the difference between
the  $K_{r+1}$-free case (Theorem \ref{thm214}) and
the  triangle-free case (Theorem \ref{thmLNW})  is that
there may exist some edges between the pair of sets $A_i$ and $A_j$,
which makes the problem seem more difficult.

\begin{claim} \label{claim4.2}
There exists a pair $\{i,j\} \subseteq [r]$ such that
$G[A_i,A_j]$ forms an empty graph, and for other pairs
  $\{s,t\} \neq \{i,j\}$,
 $G[A_s,A_t]$ is a complete bipartite subgraph in $G$.
  \end{claim}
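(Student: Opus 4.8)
The plan is to pin down the bipartite graphs $G[A_i,A_j]$ in three stages: first show each is either complete or empty, then that at least one pair is empty, and finally that at most one pair is empty. Before anything I would record the easy fact that every $A_i$ is nonempty: if some $A_i=\emptyset$, then colouring each $V_k$ by colour $k$ leaves colour $i$ unused on $N(u)=\bigcup_{k\ne i}A_k$, so setting $c(u)=i$ would properly $r$-colour $G$, contradicting that $G$ is not $r$-partite.

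For the complete-or-empty dichotomy I would reduce to a twin statement: if all vertices inside each $A_i$ share one common neighbourhood, then for any pair $\{i,j\}$ a single edge between $A_i$ and $A_j$ propagates, through the twin relation on both sides, to a complete bipartite graph, while a single non-edge propagates to the empty graph. To force the twins I would use the spectral Zykov symmetrization exactly as set up before the proof of Theorem \ref{thmLNW}: for non-adjacent $a,a'\in A_i$ with $N_G(a)\ne N_G(a')$ and $s_G(a',\bm{x})\le s_G(a,\bm{x})$, replacing $a'$ by a twin of $a$ keeps $G$ free of $K_{r+1}$ and strictly increases $\lambda$ because $\bm{x}>0$. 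By the maximality of $G$ this move must therefore leave the class of non-$r$-partite graphs, i.e.\ $G\setminus\{a'\}$ must become $r$-partite; the whole weight of Step~1 is to exclude this escape route.

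The two counting stages are then clean edge-addition arguments. If every $G[A_s,A_t]$ were complete, choosing one vertex $a_k\in A_k$ from each part yields a clique $\{u,a_1,\dots,a_r\}$, a forbidden $K_{r+1}$; hence at least one pair is empty. Conversely, if two distinct pairs $\{i,j\}$ and $\{k,l\}$ were both empty, I would form $G^{+}$ by inserting all edges of $G[A_i,A_j]$. Since these new edges run between the parts $V_i$ and $V_j$, the graph $G^{+}\setminus\{u\}$ is still $r$-partite; any $K_{r+1}$ would have to pass through $u$ and hence require a rainbow transversal clique across $A_1,\dots,A_r$, which is impossible while $G^{+}[A_k,A_l]$ stays empty; and $G^{+}\supseteq G$ keeps $\chi\ge r+1$. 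As $G$ is connected with positive Perron vector, adding an edge strictly raises $\lambda$, contradicting maximality. Together with the dichotomy this yields exactly one empty pair and all remaining pairs complete, which is the assertion.

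The hard part will be completing the twin step, namely ruling out $G\setminus\{a'\}$ being $r$-partite when $|A_i|\ge 2$. I would phrase the obstacle through chromatic-critical subgraphs: because $G\setminus\{u\}$ is $r$-partite, the vertex $u$ lies in every $(r+1)$-critical subgraph of $G$, and it suffices to find one such subgraph avoiding $a'$, equivalently to show $G\setminus\{a'\}$ is still non-$r$-partite. My approach is to exploit the ``full'' vertices $V_m\setminus A_m$: being completely joined across parts, they behave like a near-complete multipartite skeleton, which makes every proper $r$-colouring of $G\setminus\{a'\}$ essentially canonical. Then $N(u)=\bigl(\bigcup_k A_k\bigr)\setminus\{a'\}$ still meets all $r$ colours, since the surviving vertex $a^{*}\in A_i\setminus\{a'\}$ keeps colour $i$ represented, so $u$ cannot be coloured and $G\setminus\{a'\}$ stays non-$r$-partite---the contradiction that forces $a'$ and $a^{*}$ to be twins. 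The delicate cases, exactly as in verifying that $Y_r(n)$ itself is not $r$-partite, are the saturated parts $V_k=A_k$ and the parts that are too small, where the canonical-colouring rigidity requires each part to contain at least two vertices; carefully dispatching these degeneracies is where I expect the real work to lie.
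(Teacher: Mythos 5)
Your proposal follows the same route as the paper's proof: use the spectral Zykov symmetrization inside each $A_i$ to force twin classes, deduce that every $G[A_i,A_j]$ is complete or empty, rule out ``no empty pair'' with a transversal clique through $u$, and rule out ``two empty pairs'' by completing one of them. Your preliminary observation that every $A_i$ is nonempty and your two counting arguments are correct, and are in fact spelled out more carefully than in the paper (the paper compresses both counting steps into one sentence).

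The gap is exactly where you placed it, and it is genuine rather than a routine degeneracy. Your twin step yields the dichotomy only if you can rule out the escape route, i.e.\ show that $Z_{a',a}(G)$ stays non-$r$-partite, equivalently that $G\setminus\{a'\}$ is not $r$-partite; your canonical-colouring argument proves this only when every part has a full vertex, i.e.\ $V_m\setminus A_m\neq\emptyset$ for all $m$. If two or more parts are saturated ($V_m=A_m$), the rigidity genuinely fails: the clique of full vertices pins down only the colours of the unsaturated parts, a proper $r$-colouring of $G\setminus\{a'\}$ may assign $u$ the colour of a saturated part and permute colours among the saturated parts, and nothing established up to this point (the $V_j$ are independent, full vertices are completely joined across parts, each $A_j\neq\emptyset$) forbids this --- the bipartite graphs $G[A_i,A_j]$, which would constrain such colourings, are precisely the unknowns of the claim. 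So the heart of Claim \ref{claim4.2} remains unproven in those configurations, as you yourself acknowledge.

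You should know, however, that the paper does not close this hole either: its proof applies $Z_{u,v_1}, Z_{u,v_2},\ldots$ inside the $A_i$ and asserts the resulting twin structure without ever checking that non-$r$-partiteness is preserved, which is exactly the escape you identified. The clean repair --- for your write-up and for the paper's --- is to abandon class preservation altogether. Run all the operations (twin-forcing, completing all but one empty pair, enlarging $A_i$ to $V_i$ for $i\ge 3$, and the vertex moves of Claims \ref{claim4.3} and \ref{claim4.4}) as a transformation $G\mapsto H$; each step preserves $K_{r+1}$-freeness and strictly increases $\lambda$ whenever it changes the graph, but is now allowed to output an $r$-partite graph. The terminal graph $H$ lies in the family of Lemma \ref{lem42}, and that lemma bounds \emph{every} member of its family by $\lambda(Y_r(n))$, including the degenerate members that are actually $r$-partite (for instance $b_1=1$). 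Since $Y_r(n)$ itself belongs to the competition class, the chain $\lambda(G)\le\lambda(H)\le\lambda(Y_r(n))\le\lambda(G)$ collapses to equality, so no step changed the graph, $G=H$, and the equality case of Lemma \ref{lem42} gives $G=Y_r(n)$; Claim \ref{claim4.2} for the maximizer then holds a posteriori, and your saturated cases never need to be analysed.
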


\begin{proof}[Proof of Claim \ref{claim4.2}]
Let $G[A_1,A_2,\ldots ,A_r]$ be the subgraph of $G$ induced by the vertex sets $A_1,A_2$, $\ldots ,A_r$. Note by Claim \ref{claim4.1}
that $G[A_i]$ has no edge.
Claim \ref{claim4.2} is equivalent to say that
$G[A_1\cup A_2,A_3,\ldots ,A_r]$ forms a complete $(r-1)$-partite subgraph in $G$.
Since $G$ is $K_{r+1}$-free, we know that
the subgraph $G[A_1,A_2,\ldots ,A_r]$
is a $K_{r}$-free subgraph in $G$.

First of all, we choose a vertex $v_1\in A_1$
such that $s_G(v_1,\bm{x})$ is the maximum among all vertices of $ A_1$,
and observe that any two vertices of $A_1$ are not adjacent, then we apply the spectral Zykov operation $Z_{w,v_1}$
on $G$ for every vertex $w\in A_1\setminus \{v_1\}$.
These operations will make all vertices of $A_1$ being equivalent,
that is, every pair of vertices in $A_1$ has the same neighbors.
Secondly, we choose a vertex $v_2\in A_2$
such that $s_G(v_2,\bm{x})$ is maximum over all vertices of $ A_2$,
and then we apply similarly the Zykov operation $Z_{w,v_2}$
on $G$ for every $w\in A_2\setminus \{v_2\}$.
{\it Note  that all vertices in $A_1$ have the same neighbors.}
After doing Zykov's operations on vertices of $A_2$,
we claim that the induced  subgraph $G[A_1,A_2]$
is either a complete bipartite graph or an empty graph.
Indeed, if $v_2\in \cap_{v\in A_1}N(v)$, then
the operations $Z_{w,v_2}$ for all $w\in A_2 \setminus \{v_2\}$
 will lead to a complete bipartite graph between $A_1$ and $A_2$.
If $v_2\notin \cap_{v\in A_1}N(v)$, then $v_2$ is not adjacent to
all vertices of $A_1$, and so is $w$ for every $w\in A_2 \setminus \{v_2\}$, which yields that
$G[A_1,A_2]$ is an empty graph.
Moreover, by applying the similar operations on $A_3,A_4, \ldots ,A_r$,
we can obtain that for every $i,j\in [r]$ with $i\neq j$,
the induced bipartite subgraph $G[A_i,A_j]$ is either  complete bipartite or empty.
Since $G[A_1,A_2,\ldots ,A_r]$ is $K_r$-free and $G$ attains the maximum spectral radius, we know that there is exactly one pair $\{i,j\}\subseteq [r]$
such that $G[A_i,A_j]$ is an empty graph.
 \end{proof}

 We may assume that $\{i,j\}=\{1,2\}$.
In what follows,
we intend to enlarge  $ A_i$ to the whole set $V_i$ for every $i\in \{3,4,\ldots ,r\}$.
 Observe that every vertex of $V_i \setminus A_i$
 is adjacent to every vertex of $V_j$ for each $j\in [r]$ with $j\neq i$,
 adding all edges between $\{u\}$ and  $V_i \setminus A_i$
 does not create a copy of $K_{r+1}$, and it increase the spectral radius of $G$ strictly.
This observation implies that $u$ is adjacent to every vertex of $V_i$ for each $i\in \{3,4,\ldots ,r\}$;  see  (a)  in Figure \ref{fig-6}.

\begin{figure}[htbp]
\centering
\includegraphics[scale=0.7]{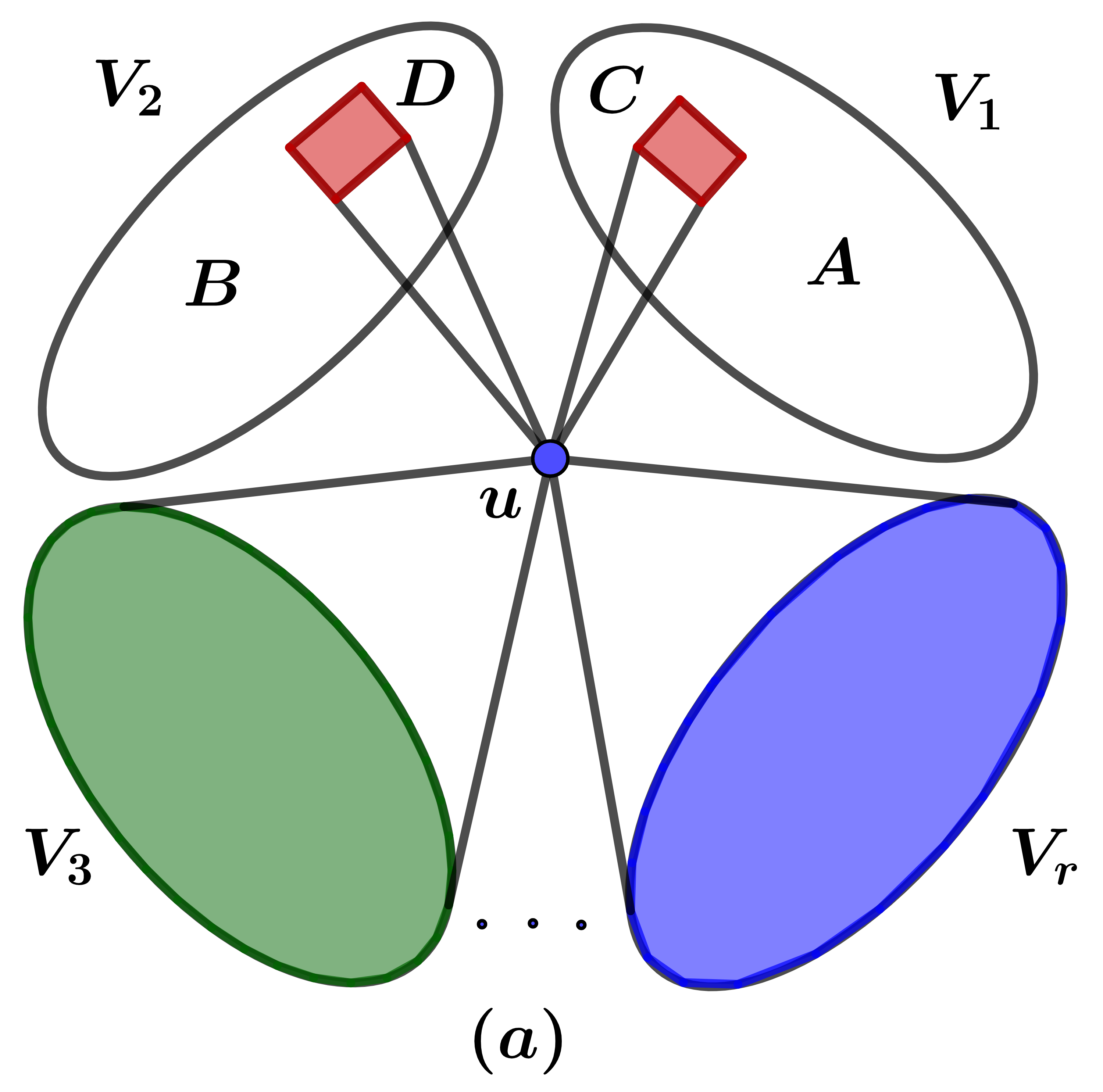}
\quad  \includegraphics[scale=0.7]{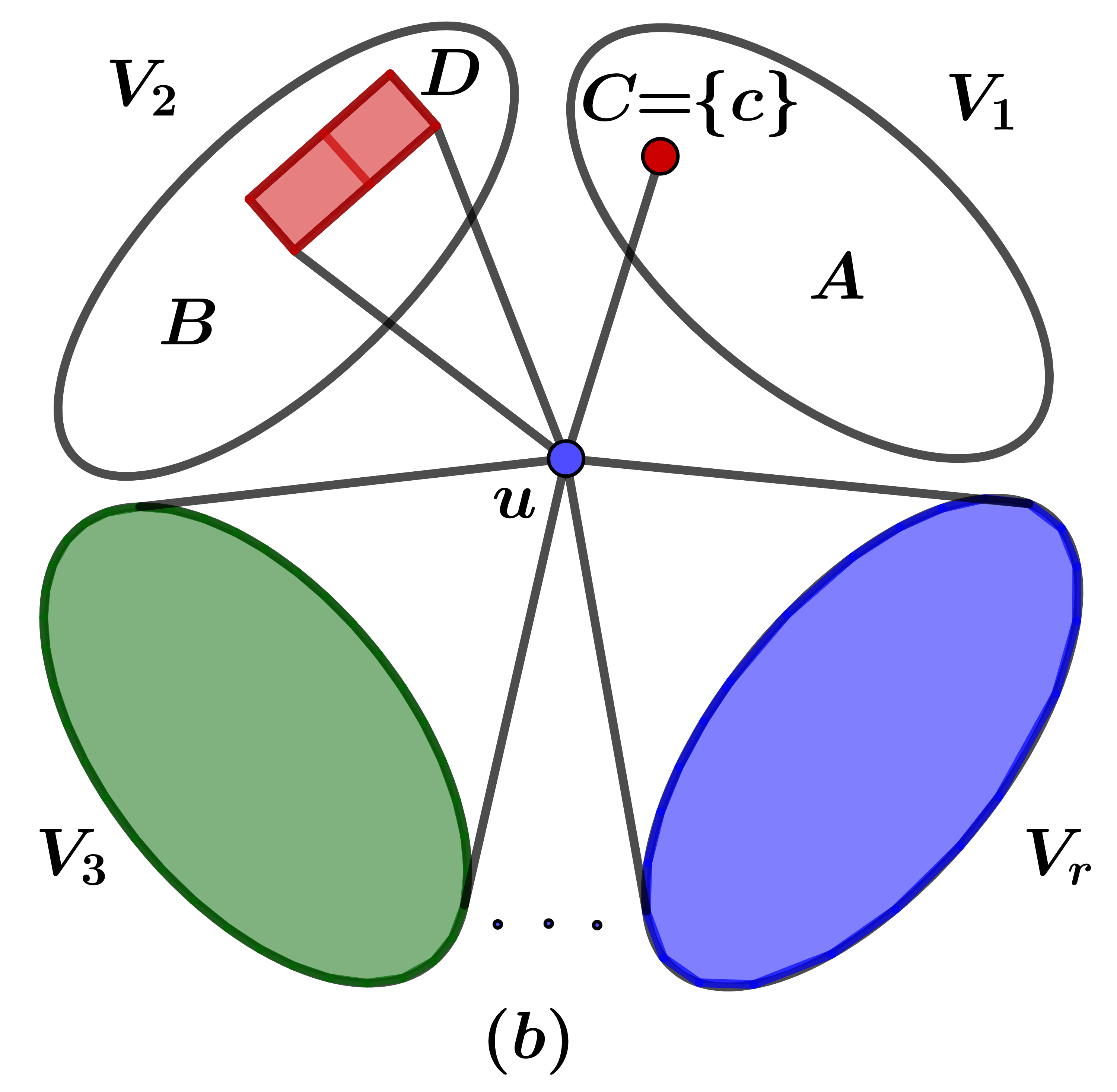}
\quad \includegraphics[scale=0.7]{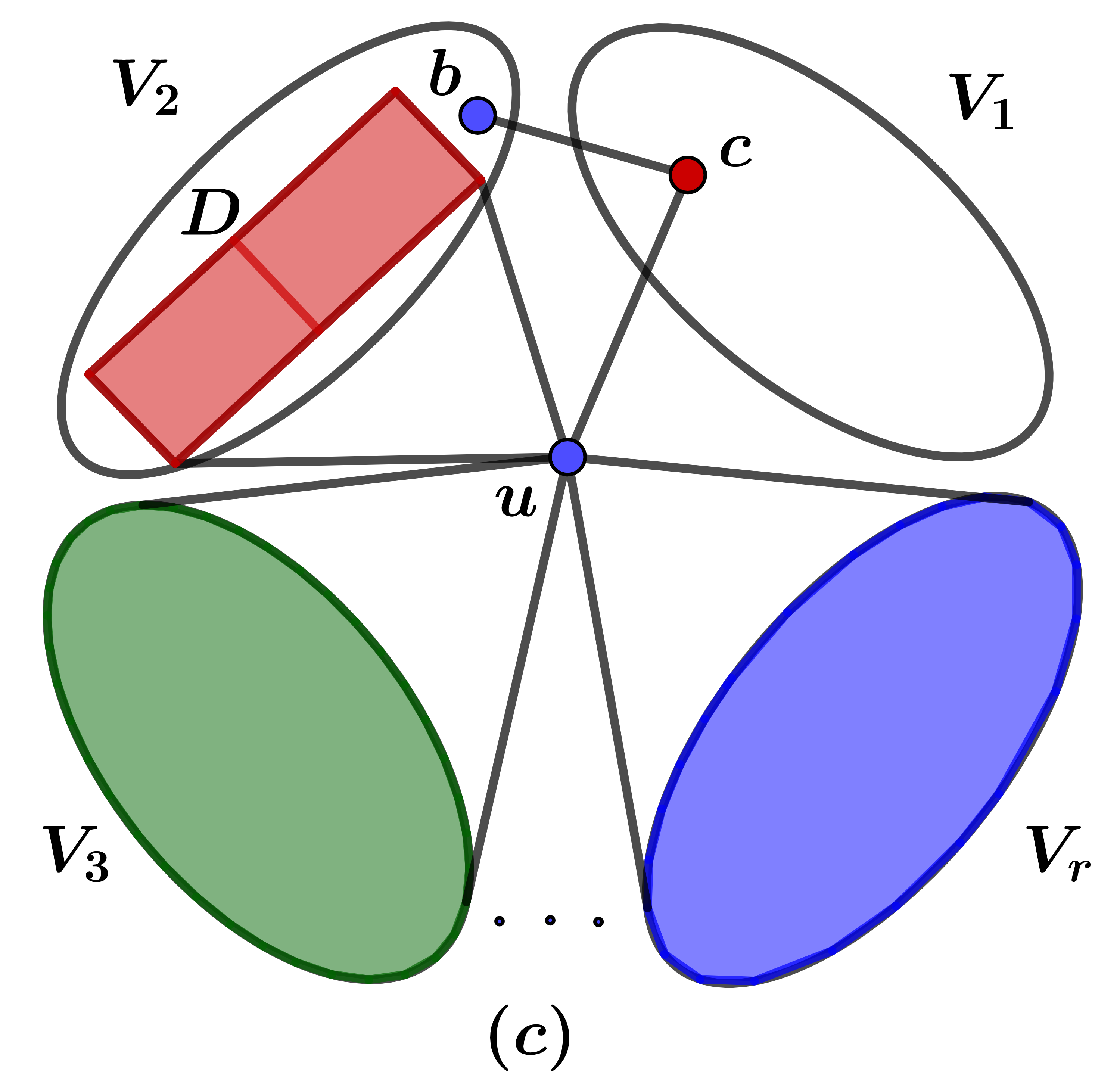}
\caption{Local changes and switches.}
    \label{fig-6}
\end{figure}

Assume that $C:=N(u)\cap V_1$ and $D:=N(u)\cap V_2$. We denote
$A:=V_1 \setminus C$ and $B:=V_2\setminus D$; see (a) in Figure \ref{fig-6}.
Note that there is no edge between $C$ and $D$.
In the remaining of the proof,
we will prove  that both $C$ and $D$ are single vertex sets.
The treatment is similar with that
of our proof of Theorem \ref{thmLNW}.

\begin{claim} \label{claim4.3}
One of the sets $C$ or $D$ has size $1$.
\end{claim}

\begin{proof}[Proof of Claim \ref{claim4.3}]
If $\sum_{v\in A}x_v \ge \sum_{v\in B}x_v$, then we choose $|C|-1$ vertices of $C$ and delete its incident edges only in $B$,
then we move these $|C|-1$ vertices into $D$ and
connect these vertices to $A$. In this process,
the edges between these $|C|-1$ vertices  and $V_3 \cup \cdots \cup V_r$ are unchanged.
We write $G'$ for the resulting graph.
Using the similar computation as in Section \ref{sec3},
 we can verify that  $\lambda (G') > \lambda (G)$.

If $\sum_{v\in A} x_v < \sum_{v\in B} x_v$,
then we can choose $|D|-1$ vertices of $D$  and
delete its incident edges only in $A$,
and then move these $|D|-1$ vertices into $C$ and
join these vertices to $B$.
This process will increase strictly the spectral radius.
From the above case analysis, we can always remove
the vertices of $G$ to force
either $|C|=1$ or $|D|=1$. \end{proof}

We may assume by symmetry that $|C|=1$ and
denote $C=\{c\}$;
see (b) in Figure \ref{fig-6}.

 \begin{claim} \label{claim4.4}
The set $D$ is a single vertex, i.e., $|D|=1$.
\end{claim}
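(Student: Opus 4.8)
The plan is to mirror the structure just used for Claim \ref{claim4.3}, but now balancing the two ends of the ``pendant path'' against each other. At this stage we have already reduced to the configuration in (b) of Figure \ref{fig-6}: the set $C=\{c\}$ is a single vertex, and $u$ is adjacent to $c\in V_1$, to the entire set $D\subseteq V_2$, and to every vertex of $V_3\cup\cdots\cup V_r$; moreover $A=V_1\setminus\{c\}$ and $B=V_2\setminus D$ induce complete bipartite joins with the appropriate parts, and there are no edges between $\{c\}$ and $D$. The remaining asymmetry is that $|D|$ may still exceed $1$, and I want to push all the weight of $D$ down to a single vertex. First I would record the eigenvector equations, using that all vertices of $A$ share a common weight $x_a$, all of $B$ share $x_b$, and all of $D$ share a common weight $x_d$ (by the twin argument already invoked).

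The key step is a weight comparison between $x_u$ and $x_c$, exactly paralleling the final two bullet points (\ding{77}) and (\ding{72}) in the proof of Theorem \ref{thmLNW}. Concretely, I would split into two cases. If $x_u\ge x_c$, then I delete $|D|-1$ vertices of $D$ together with their edges to $A$ only, and reattach these $|D|-1$ vertices as copies joined to $A\cup\{c\}$ instead --- equivalently I move their ``attachment'' from $u$ to $c$; keeping their weights equal to $x_d$ and keeping all edges into $V_3\cup\cdots\cup V_r$ unchanged, the Rayleigh quotient changes by a term proportional to $(|D|-1)x_d(x_u-x_c)\ge 0$, so the spectral radius does not decrease, and a strictness argument via the eigen-equation (take a witness vertex and compare $\lambda(G^\star)x_z$ with $\lambda(G)x_z$, as done verbatim in the earlier bullets) forces a strict increase unless $|D|=1$. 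If instead $x_u<x_c$, I perform the symmetric move, deleting $|D|-1$ vertices from $D$ and reattaching them to $B\cup\{u\}$ (enlarging the $B$-side join), again keeping the weights and the edges into the other parts fixed; the change in the Rayleigh quotient is proportional to $(|D|-1)x_d(x_c-x_u)>0$, giving a strict increase. In both cases the resulting graph is still $K_{r+1}$-free and non-$r$-partite, contradicting the maximality of $G$ unless $|D|=1$.

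The one point that needs care is verifying that each redistribution preserves the two structural constraints --- \emph{no new $K_{r+1}$} and \emph{still non-$r$-partite} --- rather than just tracking the Rayleigh quotient. For $K_{r+1}$-freeness I would note that moving the $|D|-1$ vertices to attach to $A\cup\{c\}$ (or to $B\cup\{u\}$) only changes edges within the bipartite pattern around the single exceptional pair $\{V_1,V_2\}$, and since $c$ and $u$ are never jointly adjacent to a common $V_2$-vertex together with a full clique across $V_3,\ldots,V_r$, no $(r+1)$-clique is created. For non-$r$-partiteness I would point out that the odd structure forced by the edge $uw$ together with the path through $c$ (the analogue of the subdivided edge in Theorem \ref{thmLNW}) survives the move. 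This ``the move respects both constraints'' verification is the main obstacle, since the spectral inequality itself is a routine Rayleigh-quotient estimate identical in form to Claim \ref{claim4.3}; once $|C|=|D|=1$ is established, the graph is exactly an $(r{-}1)$-partite complete graph on $V_3\cup\cdots\cup V_r$ with the extra structure of Lemma \ref{lem42}, so combining Claim \ref{claim4.2}, Claim \ref{claim4.3}, Claim \ref{claim4.4} and Lemma \ref{lem42} yields $\lambda(G)\le\lambda(Y_r(n))$ with equality iff $G=Y_r(n)$, completing the proof of Theorem \ref{thm214}.
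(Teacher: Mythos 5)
Your case $x_u\ge x_c$ contains a sign error that breaks the argument. In the configuration at hand, a vertex of $D$ is adjacent to $u$, to all of $A$, and to $V_3\cup\cdots\cup V_r$, while a vertex of $B$ is adjacent to $c$, to all of $A$, and to $V_3\cup\cdots\cup V_r$; so converting a $D$-vertex into a $B$-vertex (detaching it from $u$ and attaching it to $c$, which is what your ``move the attachment from $u$ to $c$'' amounts to) changes the Rayleigh quotient by $2(|D|-1)x_d(x_c-x_u)$, not by a positive multiple of $(x_u-x_c)$. When $x_u\ge x_c$ this change is $\le 0$, so the move gives no contradiction with the maximality of $G$. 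If instead you meant to keep the edges to $u$ while also adding the edge to $c$, the move creates a $K_{r+1}$: the moved vertex, $u$, $c$, and one vertex from each of $V_3,\ldots,V_r$ form a clique, since $uc\in E(G)$ and all three special vertices are joined to every $V_i$, $i\ge 3$. In short, when $x_u\ge x_c$ one cannot shrink $D$ directly, and this is precisely where the paper does something different: it shrinks $B$ instead, moving $|B|-1$ vertices of $B$ into $D$ (detach from $c$, join to $u$), which gains $2(|B|-1)x_b(x_u-x_c)\ge 0$ and leaves $B=\{b\}$ a single vertex; it then \emph{exchanges the roles of $u$ and $c$}, observing that with respect to the partition $(A\cup\{u\},\,D\cup\{b\},\,V_3,\ldots,V_r)$ with apex $c$, the resulting graph has the same structure as before but now with both exceptional sets of size one. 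This relabeling step is the actual content of the claim in the hard case, and your proposal is missing it.

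Your case $x_u<x_c$ is essentially the paper's move, although the description ``reattach them to $B\cup\{u\}$'' is garbled: the moved vertices lie in the same part $V_2$ as $B$ and are already adjacent to $u$; what your claimed Rayleigh change $(|D|-1)x_d(x_c-x_u)>0$ actually corresponds to (and what the paper does) is detaching $|D|-1$ vertices of $D$ from $u$ and joining them to $c$, i.e.\ moving them into $B$. That half, together with the strictness-via-eigen-equation remark and the final appeal to Lemma \ref{lem42}, is fine; but since the first case fails, the claim as a whole is not proved by your argument.
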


 \begin{proof}[Proof of Claim \ref{claim4.4}]
If $x_u< x_c$, then we choose $|D|-1$ vertices of $D$
and delete  its incident edges to vertex $u$,
then we move these $|D|-1$ vertices into
$B$ and join these these vertices to $c$, and
keeping the other edges unchanged, we denote the new graph by $G^{\star}$. Then
we can similarly get $\lambda (G^{\star}) > \lambda (G)$.
In the graph $G^{\star}$, we have $|D|=1$ and write $D=\{d\}$.
Thus $G^{\star}$ is the graph obtained from a complete $r$-partite  graph
$K_{t_1,t_2,\ldots ,t_r}$, where $\sum_{i=1}^r t_i=n-1$, by adding a new vertex $u$ and then joining $u$ to a vertex $c\in V_1$, and joining $u$ to a vertex $d\in V_2$,
and joining $u$ to all vertices of $V_3 \cup \cdots \cup V_r$, and
finally removing the edge $cd\in E(K_{t_1,t_2,\ldots ,t_r})$.

If $x_u\ge x_c$, then we choose $|B|-1$ vertices of $B$
and delete  its incident edges to vertex $c$,
then we move these $|B|-1$ vertices into $D$ and
join these vertices to vertex $u$.
We  denote the new graph by
$G^*$. Then $\lambda (G^*) > \lambda (G)$.
Thus we conclude  in the new graph $G^*$ that $B$ is a single vertex,
say $B=\{b\}$; see (c) in Figure \ref{fig-6}.
In what follows, we will exchange the position of $u$ and $c$.
Note that $c\in V_1$ is adjacent to a vertex $b\in V_1$ and
all vertices of $V_3 \cup \cdots \cup V_r$.
Now, we move vertex $c$ outside of $V_1$ and put vertex $u$ into $V_1$.
Thus the new center $c$ is adjacent to a vertex $u\in V_1$, a vertex $b\in V_2$
and all vertices of $V_3 \cup \cdots \cup V_r$. Note that $bu\notin E(G^*)$.
Hence $G^*$
has the same structure as the previous case,
and then we may assume that $|D|=1$.
\end{proof}

From the above discussion,
we know that $G$ is isomorphic to the graph
defined as in  Lemma \ref{lem42}.
By applying  Lemma \ref{lem42},
we obtain that  $\lambda (G) \le \lambda (Y_r(n))$.
Moreover, the equality holds if and only if $G=Y_r(n)$. This completes the proof of Theorem \ref{thm214-restate}.
\end{proof}

\section{Unified extension to the $p$-spectral radius}

\label{sec5}

Recall that the spectral radius of a graph is defined
as the largest  eigenvalue of its adjacency matrix.
By Rayleigh's theorem, we know that
it is also equal to the maximum value of
$\bm{x}^TA(G)\bm{x}=2\sum_{\{i,j\}\in E(G)} x_ix_j$ over all
$\bm{x}\in \mathbb{R}^n$ with $|x_1|^2 + \cdots +|x_n|^2=1$.
The definition of the spectral radius was recently extended
to the $p$-spectral radius; see \cite{KLM2014, KN2014} and references therein.
We denote the $p$-norm of $\bm{x}$ by
$\lVert \bm{x}\rVert_p
=( |x_1|^p + \cdots +|x_n|^p)^{1/p}$.
For every real number $p\ge 1$,
the {\it $p$-spectral radius} of $G$ is defined as
\begin{equation*} \label{psp}
  \lambda^{(p)} (G) : = 2 \max_{\lVert \bm{x}\rVert_p =1}
 \sum_{\{i,j\} \in E(G)} x_ix_j.
\end{equation*}
We remark that $\lambda^{(p)}(G)$ is a versatile parameter.
Indeed, $\lambda^{(1)}(G)$ is known as the Lagrangian function of $G$,
$\lambda^{(2)}(G)$ is the spectral radius of its adjacency matrix,
and
\begin{equation}  \label{eqlimit}
 \lim_{p\to +\infty} \lambda^{(p)} (G)=2e(G),
 \end{equation}
 which can be guaranteed by
$
  2e(G)n^{-2/p} \le \lambda^{(p)}(G) \le (2e(G))^{1-1/p}.
$
  To some extent, the $p$-spectral radius could be viewed as a unified extension of
  the   spectral radius as well as the size of a graph.
In addition, it is worth mentioning that
if $ 1\le q\le p$, then  $\lambda^{(p)}(G)n^{2/p} \le \lambda^{(q)}(G)n^{2/q}$
and $(\lambda^{(p)}(G)/2e(G))^p \le (\lambda^{(q)}(G)/2e(G))^q$;
see \cite[Propositions 2.13 and 2.14]{Niki2014laa}.
As commented by Kang and Nikiforov in \cite[p.3]{KN2014},
linear-algebraic methods are irrelevant for the study of
$\lambda^{(p)}(G)$ in general, and in fact no efficient methods
are known for it. Thus the study of $\lambda^{(p)}(G)$
for $p\neq 2$ is far more complicated than the classical
spectral radius.
In 2014, Kang and Nikiforov \cite{KN2014}
proved the following result for the $p$-spectral radius.

\begin{theorem}[Kang--Nikiforov, 2014] \label{thmKN-p}
If $G$ is a $K_{r+1}$-free graph on $n$ vertices,
then for every $p>1$,
\[  \lambda^{(p)}(G)\le \lambda^{(p)}(T_r(n)), \]
equality holds if and only if $G$ is the $n$-vertex Tur\'{a}n graph $T_r(n)$.
\end{theorem}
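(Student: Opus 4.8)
The plan is to transplant the alternative proof of Theorem~\ref{thm460} given in Section~\ref{sec2} into the $p$-norm setting, replacing the Rayleigh quotient by the variational definition \eqref{psp}. Since the unit sphere $\{\bm{x}:\lVert\bm{x}\rVert_p=1\}$ is compact and $2\sum_{\{i,j\}\in E(G)}x_ix_j$ is continuous, an optimal vector exists, and replacing each coordinate by its absolute value shows we may take it nonnegative. With $s_G(v,\bm{x})=\sum_{i\in N_G(v)}x_i$ as before, the Karush--Kuhn--Tucker stationarity conditions yield the $p$-spectral eigen-equation
\[ \lambda^{(p)}(G)\,x_v^{\,p-1}=s_G(v,\bm{x}) \quad \text{for every } v \text{ with } x_v>0. \]
The hypothesis $p>1$ enters precisely here: raising a zero coordinate $x_v$ to $\varepsilon$ changes the objective by $2\varepsilon\, s_G(v,\bm{x})+O(\varepsilon^2)$ while the renormalisation costs only $O(\varepsilon^{p})=o(\varepsilon)$, so at an optimum $x_v=0$ forces $s_G(v,\bm{x})=0$; on a connected graph this propagates to $\bm{x}=\bm{0}$, a contradiction. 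Hence on a connected graph the optimal vector is entry-wise positive, which is exactly the property the symmetrization exploits.

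First I would argue that a $K_{r+1}$-free maximiser $G$ of $\lambda^{(p)}$ is connected, since a single edge joining two components creates no $K_{r+1}$ yet strictly increases $\lambda^{(p)}$ for $p>1$ by strict monotonicity under edge addition. Then I would run the \emph{spectral Zykov symmetrization} exactly as in the proof of Theorem~\ref{thm460}. The crucial observation is that this operation is norm-agnostic: both Case~1 (duplicating one vertex and deleting another) and Case~2 (copying one vertex twice and deleting two) merely relocate coordinate \emph{values}, leaving the multiset of entries—and hence $\lVert\bm{x}\rVert_p$—unchanged, while the displayed computations of the change in $2\sum_{\{i,j\}\in E}x_ix_j$ are identical word for word and remain strictly positive by the positivity of $\bm{x}$. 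Consequently the same case analysis forces $G$ to be a complete $t$-partite graph for some $2\le t\le r$.

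What remains—and what I expect to be the main obstacle—is the $p$-analogue of Lemma~\ref{lem-FLZ}: among complete $t$-partite graphs on $n$ vertices the balanced Tur\'an graph $T_t(n)$ maximises $\lambda^{(p)}$, and $\lambda^{(p)}(T_t(n))$ is nondecreasing in $t$. The $p=2$ proof of Feng--Li--Zhang rests on the characteristic polynomial, which has no counterpart for $p\ne 2$, so a direct argument is required. I would first show that for a complete multipartite graph an optimal vector may be taken constant on each part, using vertex-transitivity within each part together with the strict convexity of $\lVert\cdot\rVert_p$ for $p>1$ to justify averaging. Writing $y_i$ for the common value on the $i$-th part of size $n_i$, the problem reduces to maximising $(\sum_i n_iy_i)^2-\sum_i n_i^2y_i^2$ subject to $\sum_i n_iy_i^{p}=1$, and any unbalanced pair of parts can be rebalanced by a weighted power-mean and AM--GM estimate in the spirit of Case~1 in the proof of Lemma~\ref{lem42}, strictly increasing the objective; monotonicity in $t$ then follows by splitting one part and adding the new cross-edges before rebalancing. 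Finally, maximality forces $t=r$, and tracking the strictness of each symmetrization and balancing step yields $G=T_r(n)$ as the unique extremal graph.
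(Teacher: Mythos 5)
The paper itself contains no proof of Theorem~\ref{thmKN-p}: it is quoted from \cite{KN2014}, and Section~\ref{sec5} only records the ingredients you also use (the Lagrange eigen-equation (\ref{eqeq}) and the positivity of optimal vectors of connected graphs for $p>1$) before leaving all $p$-spectral details to the reader. Judged on its own, the first half of your proposal is sound: existence and positivity of an optimal vector, connectedness of a maximiser (your phrase ``strict monotonicity under edge addition'' needs the perturbation argument you describe for positivity, since the optimal vector of a disconnected graph may vanish on a component, but that is easily repaired), and the key observation that the Zykov symmetrization of Theorem~\ref{thm460} is norm-agnostic, so a maximiser must be complete multipartite with at most $r$ parts. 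All of this is correct.

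The genuine gap is exactly where you predicted the main obstacle, and your proposed resolution fails: the $p$-norm-preserving \emph{equal-value} reallocation ``in the spirit of Case~1 of Lemma~\ref{lem42}'' does not increase the objective for all $p>1$. Complete bipartite graphs make this explicit: maximising $2ab\,xy$ subject to $ax^p+by^p=1$ gives $ax^p=by^p=\frac12$, hence $\lambda^{(p)}(K_{a,b})=2^{1-2/p}(ab)^{1-1/p}$. Now transplant the optimal vector of $K_{b-1,b+2}$ into $K_{b,b+1}$ with the common value $y_s=(2b+1)^{-1/p}$; the resulting test value is $2b(b+1)(2b+1)^{-2/p}$. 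For $b=10$ and $p=1.05$ this is about $0.6670$, whereas $\lambda^{(p)}(K_{9,12})\approx 0.6675$: the transplanted vector is strictly \emph{worse} than the old optimum, so no AM--GM or power-mean estimate of it can certify the (true) inequality $\lambda^{(p)}(K_{9,12})<\lambda^{(p)}(K_{10,11})$. Asymptotically the equal-value test fails whenever the two part sizes have odd sum and $1<p<9/8$. The conceptual reason is the Motzkin--Straus theorem: at $p=1$ every complete $t$-partite graph has the same value $\lambda^{(1)}=1-\frac1t$, so the gain from rebalancing vanishes as $p\to 1^+$ and is eventually swamped by the slack in any non-sharp inequality; your even-sum case survives, but the odd-sum case cannot be avoided. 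To close the gap you should either invoke the known $p$-spectral analogue of Lemma~\ref{lem-FLZ} --- this is precisely \cite[Theorem 2]{KNY2015}, the very reference the paper cites as the extension of Lemma~\ref{lem-FLZ} --- or compare neighbouring size-vectors by analysing the optimal vectors of \emph{both} graphs in the reduced problem of maximising $\bigl(\sum_i n_iy_i\bigr)^2-\sum_i n_i^2y_i^2$ subject to $\sum_i n_iy_i^p=1$, rather than transplanting a single vector.
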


\noindent
{\bf Remark.}
We remark that
a theorem of Motzkin and Straus \cite{MS1965} states that
Theorem \ref{thmKN-p} is also valid for  $p=1$
except for the extremal graphs attaining the equality.
Keeping (\ref{eqlimit}) in mind, we can see that
Theorem \ref{thmKN-p} is a unified extension of
 both Tur\'{a}n's Theorem 
and spectral Tur\'{a}n's Theorem \ref{thm460} by taking $p\to +\infty$ and $p=2$, respectively.
\medskip

A vector $\bm{x}\in \mathbb{R}^n$
is called a unit eigenvector corresponding to $\lambda^{(p)} (G)$
if it satisfies $\sum_{i=1}^n |x_i|^p=1$
and $\lambda^{(p)} (G)=2\sum_{\{i,j\} \in E(G)} x_ix_j$.
By  Lagrange's multiplier method, there exists a positive unit eigenvector whenever $G$ is connected.
The proof of Theorem \ref{thm214-restate} relies on
the Rayleigh representation of $\lambda (G)$ and the existence
of a positive eigenvector.
For the $p$-spectral radius,
 there is also a positive vector
corresponding to $\lambda^{(p)}(G)$.
Applying the similar techniques,
one can  extend Theorem \ref{thm214-restate} to
the $p$-spectral radius. We leave the details for interested readers.

\begin{theorem}
 Let $G$ be an $n$-vertex graph.
If $G$ does not contain $K_{r+1}$ and $G$ is not $r$-partite,
then for every $p>1$, we have
\[  \lambda^{(p)} (G) \le \lambda^{(p)} (Y_r(n)).  \]
Moreover, the equality holds if and only if $G=Y_r(n)$.
\end{theorem}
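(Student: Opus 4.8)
The plan is to reduce the final statement (the $p$-spectral analogue of Theorem~\ref{thm214}) to exactly the same combinatorial skeleton used in the proof of Theorem~\ref{thm214}, and then to observe that every step of that proof only used two structural facts about the spectral radius: the Rayleigh-type representation $\lambda(G)=2\sum_{\{i,j\}\in E(G)}x_ix_j$ for an optimal unit vector $\bm{x}$, and the existence of an entry-wise positive optimal vector whenever $G$ is connected. As the paragraph preceding the theorem statement already establishes, both facts carry over verbatim to $\lambda^{(p)}$ for $p>1$: the definition~(\ref{psp}) is itself a Rayleigh-type maximization, and the eigen-type relation~(\ref{eqeq}) together with the connectivity of an extremal graph forces any nonnegative optimal vector to be strictly positive. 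So the architecture of the argument is inherited for free; what must be checked is that each inequality comparing $2\sum x_ix_j$ before and after a symmetrization step still goes through.

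First I would fix an extremal $K_{r+1}$-free non-$r$-partite graph $G$ maximizing $\lambda^{(p)}(G)$, argue connectivity (adding an edge between components only increases the objective and preserves $K_{r+1}$-freeness and non-$r$-partiteness), and take a positive unit optimal vector $\bm{x}\in\mathbb{R}_{>0}^n$. Next I would reprove Claim~\ref{claim4.1}: the spectral Zykov symmetrization $Z_{u,v}$ replaces $u$ by a twin of $v$, and its effect on the objective is to swap $x_u s_G(u,\bm{x})$ for $x_u s_G(v,\bm{x})$ in $2\sum_{\{i,j\}\in E}x_ix_j$; since the $p$-norm of $\bm{x}$ is unchanged by merely relabeling which edges are present (we do not move weight, only neighborhoods), the same strict-increase argument shows $\lambda^{(p)}(Z_{u,v}(G))>\lambda^{(p)}(G)$ when $s_G(u,\bm{x})<s_G(v,\bm{x})$, using positivity of $\bm{x}$. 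Iterating until no such non-adjacent pair remains yields a graph that is $r$-partite after deleting one vertex $u$, exactly as before. Claims~\ref{claim4.2}, \ref{claim4.3}, and~\ref{claim4.4} then transfer: each is driven by a local replacement whose net change in $2\sum x_ix_j$ is a signed combination of products $x_i x_j$ that is controlled by comparing sums of weights of the relevant parts, and none of these moves alters $\lVert\bm{x}\rVert_p$. Thus $G$ is forced into the structure of the graph described in Lemma~\ref{lem42}.

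The final step is the $p$-analogue of invoking Lemma~\ref{lem42}. Here the honest obstacle appears: Lemma~\ref{lem42} is proved via characteristic polynomials and determinant recurrences, machinery that is genuinely linear-algebraic and, as Kang and Nikiforov warn in the excerpt, \emph{irrelevant} for $\lambda^{(p)}$ when $p\neq 2$. So I cannot simply quote it. Instead I would re-derive the balancing conclusion directly at the level of the objective functional, in the spirit of Case~1 of the proof of Lemma~\ref{lem42} (the weight-balanced argument that used only AM--GM and the weighted power-mean inequality, both of which are $p$-agnostic). Concretely, for two parts $B_i,B_j$ among $\{3,\dots,r\}$ with $|b_i-b_j|\ge 2$, I would define a test vector that is constant $\bigl(\tfrac{b_i x_i^{?}+b_j x_j^{?}}{\,\cdot\,}\bigr)$ on the re-balanced parts and unchanged elsewhere, chosen so that its $p$-norm is exactly $1$; the power-mean inequalities then show the objective strictly increases, contradicting maximality. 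The two special vertices $v,w$ adjacent to $u$ are handled by the same device applied to the parts $B_1^-,B_2^-$. The hard part will be carrying out the balancing for \emph{all four} configurations (i)--(iv) using only convexity inequalities rather than the polynomial identities available at $p=2$, and in particular verifying the strict inequality at the boundary cases where part sizes differ by an odd amount; but since Case~1 of Lemma~\ref{lem42} already did this with $p$-independent tools, I expect the extension to be routine though tedious, which is exactly why the excerpt leaves it "for interested readers."
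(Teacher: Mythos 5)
Your proposal follows essentially the same route as the paper: the paper's own ``proof'' of this theorem consists precisely of the observation that the Rayleigh-type representation and the existence of a positive optimal vector (via the Lagrange condition (\ref{eqeq})) carry over to $\lambda^{(p)}$ for $p>1$, after which it invokes ``a similar line of the proof of Theorem \ref{thm214}'' and leaves every detail to the reader. Where you genuinely depart from (and improve on) the paper is in flagging the one ingredient that does \emph{not} transfer verbatim: Lemma \ref{lem42} is proved there by characteristic polynomials and determinant recurrences, machinery with no analogue for $p\neq 2$, so it cannot simply be quoted; the paper's sketch silently glosses over this. Your proposed repair --- redoing the balancing in all configurations by a test vector normalized in the $p$-norm together with AM--GM and power-mean inequalities --- is exactly what the paper's own remark after Lemma \ref{lem42} hints is possible (``a weight-balanced argument similar with that of the first case,'' at the cost of ``a great deal of tedious calculations''). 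It is worth recording that Case 1 does go through for all $p>1$: taking $y_s=\bigl((b_ix_i^p+b_jx_j^p)/(2b)\bigr)^{1/p}$, AM--GM gives $(by_s)^2\ge b^{2-2/p}(b_ib_j)^{1/p}x_ix_j>b_ib_jx_ix_j$ because $b^2>b_ib_j$ and $1-1/p>0$, while the weighted power-mean inequality gives $2by_s\ge b_ix_i+b_jx_j$, so the objective strictly increases. Like the paper, you stop short of executing the tedious analysis for configurations (ii)--(iv), where the asymmetry caused by the special vertices $v,w,u$ makes the convexity computations messier and the strictness claims need care; so neither argument is complete in print, but yours is the more honest sketch, since it identifies the actual obstacle and names the $p$-agnostic tools needed to clear it.
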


\section{Concluding remarks}

\label{sec6}

We shall conclude with
some possible problems for interested readers.
To begin with, we define an extremal function $\psi (n,F,t)$
as the maximum number of edges in an $n$-vertex $F$-free graph with
the chromatic number $\chi (G) \ge t$.
In particular, Theorem \ref{thmBrouwer} says that
$\psi (n,K_{r+1}, r+1) = e(T_r(n)) - \lfloor \frac{n}{r} \rfloor +1$.
Similarly, we can define the spectral extremal function as
$ \psi_{\lambda} (n,F,t):= \max \{ \lambda (G) : |G| =n,
F\nsubseteq G , \chi (G)\ge t\}$.
In Theorem \ref{thm214}, we have proved that
$\psi_{\lambda} (n,K_{r+1}, r+1) = \lambda (Y_r(n))$.
It is meaningful to study
the functions $ \psi (n,F,t)$
and $ \psi_{\lambda} (n,F,t)$ in general.

We write $q(G)$ for the
  signless Laplacian spectral radius, i.e.,
 the largest eigenvalue of
 the {\it signless Laplacian matrix}  $Q(G)=D(G) +
 A(G)$, where $D(G)= \mathrm{diag} (d_1,\ldots ,d_n)$
 is the degree diagonal matrix and
 $A(G)$ is the adjacency matrix.
In 2013,
He, Jin and Zhang \cite[Theorem 1.3]{HJZ2013}  proved the signless Laplacian spectral version of Tur\'{a}n's theorem, which states that if $G$ is a $K_{r+1}$-free graph on
$n$ vertices, then $q(G)\le q(T_r(n))$, equality holds if and only if $r=2$ and $G=K_{t,n-t}$ for some $t$, or $r\ge 3$ and $G=T_r(n)$.
Similar with the adjacency spectral radius, the signless Laplacian spectral version also implies
the edge Tur\'{a}n theorem.
It is interesting to study whether it is possible to
 extend the results of our paper
in terms of $q(G)$.
For example, given an integer $r\ge 3$, whether $Y_r(n)$ is the extremal graph attaining
the maximum signless Laplacian spectral radius among all
non-$r$-partite $K_{r+1}$-free graphs.

\medskip

After the paper is submitted, we are aware of some recent results
 \cite{GLZ2021, ZZ2021} for non-bipartite $C_{2k+1}$-free graphs
with $k\ge 2$, and \cite{LMX2022} for the signless Laplacian spectral radius of non-bipartite $C_3$-free graphs.

\subsection*{Acknowledgements}
We are
thankful to the reviewers for reading the manuscript carefully, checking all the details
and giving insightful comments to help improve the manuscript. The research was supported by NSFC (Grant No.11931002).

\frenchspacing

\end{document}